\newcommand{\Z}{{\mathbb Z}}
\newcommand{\N}{{\mathbb N}}
\newcommand{\Q}{{\mathbb Q}}
\newcommand{\Ec}{{\mathcal E}}
\newcommand{\res}{\mathop{\rm res}}       
\newcommand{\Kc}{{\mathcal K}}
\newcommand{\LL}{\mathcal L}
\newcommand{\OO}{{\mathcal O}}
\newcommand{\End}{{\rm End}}
\newcommand{\Spec}{{\rm Spec} }
\newcommand{\sgn}{{\rm sgn}}
\newcommand{\gm}{\mathbb G_{m}}
\newcommand{\z}{{\mathbb{Z}}}
\newcommand{\lrto}{\longrightarrow}
\newcommand{\lo}{L}
\newcommand{\uz}{\underline{\mathbb{Z}}}
\newcommand{\vv}{{\mathbb{V}}}
\theoremstyle{plain}
\newtheorem{theor}{Theorem}[section]
\newtheorem{prop}[theor]{Proposition}
\newtheorem{corol}[theor]{Corollary}
\newtheorem{lemma}[theor]{Lemma}
\theoremstyle{remark}
\newtheorem{rmk}[theor]{Remark}
\newtheorem{examp}[theor]{Example}
\theoremstyle{definition}
\newtheorem{defin}[theor]{Definition}
\newtheorem{defin-prop}[theor]{Definition-Proposition}
\newcommand{\quash}[1]{}
\title{Higher-dimensional Contou-Carr\`ere symbol and continuous automorphisms\footnotetext{This work is supported by the RSF under a grant 14-50-00005.}}
\author{Sergey Gorchinskiy and Denis Osipov}
\date{}
\begin{document}

\maketitle

\begin{abstract}
We prove that the higher-dimensional Contou-Carr\`ere symbol is invariant under continuous automorphisms of algebras of iterated Laurent series over a ring.
Applying this property, we obtain a new explicit formula for the higher-dimensional Contou-Carr\`ere symbol. Unlike previously known formulas, this formula is given over an arbitrary ring, not necessarily a $\Q$-algebra, and does not involve algebraic $K$-theory.
\end{abstract}

\tableofcontents

\section{Introduction}

The goal of this paper is to study a relation between the higher-dimensional Contou-Carr\`ere symbol and continous endomorphisms of the algebra of iterated Laurent series over a ring.

\medskip

More precisely, given a natural $n\geqslant 1$, an algebra of iterated Laurent series over a ring~$A$ is the algebra $A((t_1)) \ldots ((t_n))$, which we denote, for short, by $\LL^n(A)$ (see Section~\ref{sec:prel} for more details). The $n$-dimensional Contou-Carr\`ere symbol is an antisymmetric multilinear map
\begin{equation}\label{eq:CCintr}
CC_n\,:\,\big(\LL^n(A)^*\big)^{\times (n+1)} \longrightarrow A^*\,,
\end{equation}
which is functorial with respect to a ring $A$, and where for a ring $R$ we denote by $R^*$  the group of its invertible elements. The one-dimensional Contou-Carr\`ere symbol (or, simply, the Contou-Carr\`ere symbol) was first introduced and studied by Contou-Carr\`ere himself, see~\cite{CC1,CC2}, and by Deligne, see~\cite{Del}. The two-dimensional Contou-Carr\`ere symbol was introduced and studied by the second named author and Zhu in~\cite{OZ1}. For arbitrary~$n$ (essentially, for $n > 2$) the higher-dimensional Contou-Carr\`ere symbol was extensively studied by the authors in~\cite{GOMS}.

In particular, when~$A$ is a field, this symbol coincides with the $n$-dimensional tame symbol from~\cite{P1}, which is a higher-dimensional generalization of the usual tame symbol. When $A$ is a certain Artinian ring over a field $k$, one obtains from the higher-dimensional Contou-Carr\`ere symbol the higher-dimensional residue. If the field $k$ is finite, then one obtains also the Witt pairing in the higher-dimensional local class field theory, see~\cite[\S\,8]{OZ1} and~\cite[\S\,9]{GOMS}.

The algebra $\LL^n(A)$ has a natural topology making it a topological $A$-module (with the discrete topology on $A$). For $n=1$, this topology is the usual topology on $A((t))$ with the base of open neighborhoods of zero given by the \mbox{$A$-mo\-dules}~$t^iA[[t]]$, ${i\in\z}$. In~\cite{GOT2}, the authors have studied continuous endomorphisms of $A$-algebra $\LL^n(A)$. In particular, it was obtained an invertibility criterion for such endomorphisms, see~\cite[Theor.\,6.8]{GOT2}, together with an explicit formula for the inverse endomorphism, see~\cite[Rem.\,6.4]{GOT2}.

\medskip

Our first main result describes how the higher-dimensional Contou-Carr\`ere symbol is changed under continuous endomorphisms of the $A$-algebra $\LL^n(A)$, see Theorem~\ref{theor:invCC}. In particular, this implies that the higher-dimensional Contou-Carr\`ere symbol is invariant under continuous automorphisms of the $A$-algebra $\LL^n(A)$, see  Corollary~\ref{cor:invCC}. We provide in Remark~\ref{rmk:general} a generalization of Theorem~\ref{theor:invCC} to continuous homomorphisms of \mbox{$A$-algebras} from $\LL^n(A)$ to $\LL^m(A)$. Also, we give an example of a non-continuous automorphism of the $A$-algebra $\LL^n(A)$ that does not preserve the higher-dimensional Contou-Carr\`ere symbol, see Proposition~\ref{prop:noninv} and Example~\ref{ex:noninv}.
\quash{This is essentially based on an example by Yekutieli of non-invariance of the higher-dimensional residue map, see~\cite[Ex.\,2.4.24]{Y}.}

The invariance result leads to a new explicit formula for the higher-dimensional Contou-Carr\`ere symbol, which is our second main result. Let us explain this in more derail.

Recall that if $A$ is a $\Q$-algebra, then the map $CC_n$ as in formula~\eqref{eq:CCintr} is given by an explicit formula, see~\cite[\S\,8.4]{GOMS} and also formulas~\eqref{eq:CC3},~\eqref{eq:CC1}, and~\eqref{eq:CC2} in Section~\ref{sec:prel} below. The explicit formula is easily applicable over $\Q$-algebras. Notice that this formula involves the standard $\log$ and $\exp$ series, whose coefficients have non-trivial denominators, and thus the formula can not be applied directly over arbitrary rings (in particular, over rings of positive characteristic).

On the other hand, when $n=1$, for any ring $A$ (not necessarily a $\Q$-algebra), there is another explicit formula for the Contou-Carr\`ere symbol, see~\cite[Intr.]{AP} and \cite[\S\,2]{OZ1}. This formula is based on the fact that any invertible Laurent series from $A((t))^*$ can be decomposed into an infinite product, where almost all factors are of type $1+a_l t^l$ with~$l\in\z$, $a_l \in A$.

When $n=2$, the situation is more delicate. The decomposition of an element from~$A((t_1))((t_2))^*$ into an infinite product, where almost all factors are of type $1 + a_{l_1,l_2} t_1^{l_1} t_2^{l_2}$ with $l_1,l_2\in\z$, $a_{l_1,l_2} \in A$, was obtained in~\cite[Prop.\,3.14]{OZ1} only when the nil-radical of the ring $A$ is a nilpotent ideal (for example, this holds if $A$ is Noetherian). Correspondingly, an explicit formula for the two-dimensional Contou-Carr\`ere symbol was obtained in~\cite{OZ1} only over such rings, see~\cite[Def.\,3.5, Prop.\,3.16]{OZ1}. Nevertheless, the definition of the two-dimensional Contou-Carr\`ere symbol was given in~\cite{OZ1} over an arbitrary ring~$A$ by means of generalized commutators in categorical central extensions, whose construction uses, in general, Nisnevich coverings of the scheme $\Spec(A)$.

For arbitrary $n\geqslant 1$, the definition of the higher-dimensional Contou-Carr\`ere symbol over any ring is given by means of boundary maps for algebraic $K$-groups, see~\cite[\S\,7]{OZ1} and~\cite[\S\,8.2]{GOMS}.

In this paper, we obtain an explicit formula for the higher-dimensional Contou-Carr\`ere symbol, which is applicable over any ring $A$ and does not use algebraic $K$-theory, see Definition~\ref{defin:CCtilde}, formula~\eqref{eq:inverse}, Theorem~\ref{theor:tilde}, and also Remark~\ref{rmk:explalt}. The formula is quite elementary and uses only products of series, the higher-dimensional residue map, and a canonical map $\pi\colon \LL^n(A)^*\to A^*$, which is induced by a natural decomposition of the group $\LL^n(A)^*$, see formula~\eqref{eq:maps} in Section~\ref{sec:prel}. The main idea for the construction of this explicit formula comes from the above invariance property of the higher-dimensional Contou-Carr\`ere symbol combined with an invertibility criterion for continuous endomorphisms of the $A$-algebra~$\LL^n(A)$ obtained in~\cite{GOT2}.

\medskip

We are grateful to Alexei Rosly for helpful remarks concerning the exposition of the paper.

\section{Preliminaries and notation}\label{sec:prel}

For short, by a ring, we mean a commutative associative unital ring and similarly for algebras.

Throughout the paper, $A$ denotes a ring and $n\geqslant 1$ is a positive integer. Let $\LL(A):=A((t)):=A[[t]][t^{-1}]$ be the ring of Laurent series over $A$ and let $\LL^n(A):=A((t_1))\ldots((t_n))=\big(\LL^{n-1}(A)\big)((t_n))$ be the ring of iterated Laurent series over $A$. Explicitly, an iterated Laurent series $f\in\LL^n(A)$ has a form $f=\sum\limits_{l\in\z^n}a_lt^l$, where $a_l\in A$, $t^l:=t_1^{l_1}\ldots t_n^{l_n}$ for $l=(l_1,\ldots,l_n)\in\z^n$, and the series satisfies a certain condition on its support, see, e.g.,~\cite[\S\,3.1]{GOMS}.

There is a natural topology on $\LL^n(A)$ such that $\LL^n(A)$ is a topological group with respect to the addition of iterated Laurent series. The base of open neighborhoods of zero in $\LL(A)$ is given by \mbox{$A$-submodules} \mbox{$U_i:=t^i A[[t]]$}, $i\in \Z$. The base of open neighborhoods of zero in $\LL^n(A)$ is given by \mbox{$A$-submodules}
$$
\mbox{$U_{i, \{V_j \}}:=\Big(\bigoplus\limits_{j<i} t_n^j\cdot V_j \Big)$} \,\oplus \, \mbox{$ t_n^i\cdot
\LL^{n-1}(A)[[t_n]]\,,$}
$$
where $i\in\Z$ and for each $j$, $j<i$, the $A$-module $V_j$ is from the base of open neighborhoods of zero in $\LL^{n-1}(A)$. See more details on the topology, e.g., in~\cite[\S\S\,3.2, 3.3]{GOMS} and~\cite[\S\,2]{GOT2}.

\medskip

Let $L^n\gm$ be a group functor on the category of rings that sends a ring $A$ to the group of invertible elements $\LL^n(A)^*$ in the ring $\LL^n(A)$. We have an embedding $\gm\hookrightarrow L^n\gm$ given by constant series. Let~$\uz$ be a group functor on the category of rings that sends a ring $A$ to the group $\uz(A)$ of all $\z$-valued locally constant functions on $\Spec(A)$ with the Zariski topology. We have an embedding
$$
\uz^n\hookrightarrow L^n\gm\,,\qquad \underline{l}\longmapsto t^{\underline{l}}\,,
$$
where $\underline{l}=(\underline{l}_1,\ldots\underline{l}_n)\in\uz^n(A)$ is a locally constant $\z^n$-valued function on $\Spec(A)$. The element $t^{\underline{l}}\in \LL^n(A)$ is defined naturally, see details in~\cite[\S\,4.2]{GOMS}. For example, suppose that $n=1$ and there is a decomposition into a product of rings $A\simeq B\times B'$ such that $\underline{l}\in\uz(A)$ takes the value $m\in\z$ on~$\Spec(B)$ and takes the value $m'\in\z$ on $\Spec(B')$. Then $t^{\underline{l}}$ is equal to the element $(t^{m},t^{m'})\in \LL(B)\times\LL(B')\simeq \LL(A)$.

Define a lexicographical order on $\z^n$ such that $(l_1,\ldots,l_n)\leqslant (l'_1,\ldots,l'_n)$ if and only if either $l_n < l'_n$, or $l_n=l'_n$ and $(l_1, \ldots, l_{n-1}) \leqslant (l'_1, \ldots, l'_{n-1})$. For short, we put ${0:=(0,\ldots,0)\in\z^n}$. Let $\vv_{n,+}(A)$ be the subgroup of $L^n\gm(A)$ that consists of all iterated Laurent series of type $1+\sum\limits_{l>0}a_l t^l$. Let $\vv_{n,-}(A)$ be the subgroup of $L^n\gm(A)$ that consists of all iterated Laurent series of type $1+\sum\limits_{l<0}a_l t^l$ such that $\sum\limits_{l<0}a_l t^l$ is a nilpotent element of $\LL^n(A)$. Then there is the following isomorphism of group functors, see~\cite{CC1},~\cite{CC2} for $n=1$ and~\cite[Prop.\,4.3]{GOMS} for arbitrary $n$:
\begin{equation}\label{eq:decv}
\lo^n\gm\simeq\uz^n\times\gm\times\vv_{n,+}\times\vv_{n,-}\,.
\end{equation}
The decomposition~\eqref{eq:decv} defines morphisms of group functors
\begin{equation}\label{eq:maps}
\nu\,:\,L^n\gm\longrightarrow \uz^n\,,\qquad \pi\,:\,L^n\gm\longrightarrow \gm\,.
\end{equation}
In particular, for any $i$, $1\leqslant i\leqslant n$, we have that $\nu(t_i)=(0,\ldots,0,1,0,\ldots,0)$, where $1$ stands on the $i$-th place. The morphism $\nu$ is the classical discrete valuation when $A$ is a field and $n=1$. Let a group functor $(L^n\gm)^0$ be the kernel of this morphism~$\nu$. Clearly, we have a decomposition of group functors
\begin{equation}\label{eq:dec1}
L^n\gm\simeq \uz^n\times (L^n\gm)^0\,.
\end{equation}

Suppose for an element $f=\sum\limits_{l\in\Z}a_lt^l$ in $\LL^n(A)^*$, the function $\nu(f)\in\uz^n(A)$ is constant on~$\Spec(A)$, that is, we have $\nu(f)=l_0\in\z^n$. Then $\pi(f)$ coincides with the coefficient~$a_{l_0}$ modulo the nilradical of the ring $A$, see~\cite[Lem.\,4.7(i)]{GOMS}. However, in general,~$\pi(f)$ is not equal to $a_{l_0}$ in $A$, see~\cite[Ex.\,4.5(iii)]{GOMS}.

\medskip

Let $(L^n\gm)^{\sharp}(A)$ consist of all iterated Laurent series $\sum\limits_{l\in\z^n}a_l t^l\in\LL^n(A)$ such that the iterated Laurent series $1-\sum\limits_{l\leqslant 0}a_lt^l$ is a nilpotent element of $\LL^n(A)$. Then $(L^n\gm)^{\sharp}$ is a group subfunctor in $L^n\gm$ and we have the following decomposition of the group functor~$(L^n\gm)^0$:
\begin{equation}\label{eq:dec2}
(\lo^n\gm)^0=\gm\cdot(\lo^n\gm)^{\sharp}\,.
\end{equation}
In addition, we have the following topological characterization of $(L^n\gm)^{\sharp}(A)$. For any $f\in\LL^n(A)$, one has $f\in (L^n\gm)^{\sharp}(A)$ if and only if the sequence $\{(f-1)^i\}$, $i\in\N$, tends to zero in $\LL^n(A)$, see~\cite[Def.\,3.7, Prop.\,3.8]{GOMS}. It follows that if $A$ is a $\Q$-algebra, then for any $f\in (L^n\gm)^{\sharp}(A)$, the series $\log(f):=\sum\limits_{i\geqslant 1}(-1)^{i+1}\frac{(f-1)^i}{i}$ converges in $\LL^n(A)$, see~\cite[\S\,3.3]{GOMS}.

\medskip

Let $\End^{\rm c,alg}_A\big(\LL^n(A)\big)$ be the monoid of all continuous endomorphisms of the \mbox{$A$-algebra}~$\LL^n(A)$. An endomorphism $\phi\in\End^{\rm c,alg}_A\big(\LL^n(A)\big)$ is determined uniquely by the collection $\phi(t_1),\ldots,\phi(t_n) \in\LL^n(A)^*$. Given a collection $\varphi_1,\ldots,\varphi_n \in\LL^n(A)^*$, we have an $(n\times n)$-matrix $\big(\nu(\varphi_1),\ldots,\nu(\varphi_n)\big)$ with entries in $\uz(A)$, where we consider elements $\nu(\varphi_i)\in\uz^n(A)$ as columns. There is an endomorphism $\phi\in\End^{\rm c,alg}_A\big(\LL^n(A)\big)$ with $\phi(t_i)=\varphi_i$, $1\leqslant i\leqslant n$, if and only if the matrix $\big(\nu(\varphi_1),\ldots,\nu(\varphi_n)\big)$ is upper-triangular and its diagonal elements are positive point-wise on $\Spec(A)$, see~\cite[Theor.\,4.7]{GOT2}.

For an endomorphism $\phi\in\End^{\rm c,alg}_A\big(\LL^n(A)\big)$, let $\Upsilon(\phi)$ be the \mbox{$(n\times n)$-matrix} $\big(\nu(\phi(t_1)),\ldots,\nu(\phi(t_n))\big)$ with entries in $\uz(A)$. Also, put $d(\phi):=\det\big(\Upsilon(\phi)\big)\in\uz(A)$. For any $f\in \LL^n(A)^*$, we have the following equality in $\uz^n(A) $, see~\cite[Prop.\,3.10]{GOT2}:
\begin{equation}\label{eq:invnu}
\nu(\phi(f))=\Upsilon(\phi)\cdot\nu(f)\,.
\end{equation}

An endomorphism $\phi$ is invertible if and only if the matrix~$\Upsilon(\phi)$ is invertible, see~\cite[Theor.\,6.8]{GOT2}. Explicitly, the last condition means that the upper-triangular matrix $\Upsilon(\phi)$ has units on the diagonal, or, equivalently, that~$d(\phi)=1$.

\medskip

Let $\Omega^1_{\LL^n(A)/A}$ be the $\LL^n(A)$-module of K\"ahler differentials of the ring $\LL^n(A)$ over $A$ and put $\Omega^n_{\LL^n(A)/A}:=\bigwedge^n_{\LL^n(A)}\Omega^1_{\LL^n(A)/A}$. Let $\widetilde{\Omega}^1_{\LL^n(A)}$ be the quotient of the $\LL^n(A)$-module~$\Omega^1_{\LL^n(A)/A}$ by the submodule generated by elements ${df-\sum\limits_{i=1}^n\frac{\partial f}{\partial t_i}}dt_i$, where ${f\in\LL^n(A)}$. Then $\widetilde{\Omega}^1_{\LL^n(A)}$ is a free $\LL^n(A)$-module of rank $n$. Put ${\widetilde{\Omega}^n_{\LL^n(A)}:=\bigwedge^n_{\LL^n(A)}\widetilde{\Omega}^1_{\LL^n(A)}}$. One has an $A$-linear residue map
$$
\res\,:\,\widetilde{\Omega}^n_{\LL^n(A)}\longrightarrow A\,,\qquad \mbox{$\sum\limits_{l\in\z^n}a_lt^l\cdot dt_1\wedge\ldots\wedge dt_n\longmapsto a_{-1\ldots-1}$}\,.
$$
We denote the natural composition
$$
\Omega^n_{\LL^n(A)/A}\longrightarrow \widetilde{\Omega}^n_{\LL^n(A)}\stackrel{\res}\longrightarrow A
$$
also by $\res$.

Any endomorphism $\phi\in\End^{\rm c,alg}_A\big(\LL^n(A)\big)$ defines naturally an $A$-linear endomorphism of~$\widetilde{\Omega}^n_{\LL^n(A)}$, which we denote also by $\phi$ for simplicity. We have the following equality, see~\cite[Cor.\,5.4]{GOT2}:
\begin{equation}\label{eq:invres}
\res\big(\phi(\omega)\big)=d(\phi)\res(\omega)\,.
\end{equation}

\medskip

In~\cite{GOMS} the authors have introduced and studied a special class of ind-affine schemes called thick ind-cones, see~\cite[\S\,5.4]{GOMS} for more details. The point is that many functors that occur in the context of iterated Laurent series are represented by ind-affine schemes which are products of ind-flat ind-affine schemes over~$\z$ and thick ind-cones (in the one-dimensional case, the corresponding ind-affine schemes are just ind-flat over $\z$). Such products possess many nice properties, mainly concerning regular functions on them. In particular, if an ind-affine scheme~$X$ is a product of an ind-flat ind-affine scheme over~$\z$ and a thick ind-cone, then the natural homomorphism between algebras of regular functions ${\OO(X)\to \OO(X_{\Q})}$ is injective, see~\cite[Prop.\,5.17]{GOMS}. Besides, for all such ind-schemes $X$ and $Y$, their product~${X\times Y}$ is also isomorphic to a product of an ind-flat ind-affine scheme over $\z$ and a thick ind-cone, see~\cite[Lem.\,5.13]{GOMS}.

The functors $L^n\gm$, $(L^n\gm)^0$, $(L^n\gm)^{\sharp}$, $\vv_{n,+}$, and $\vv_{n,-}$ are represented by ind-affine schemes which are products of ind-flat ind-affine schemes over $\z$ and thick ind-cones, see~\cite[\S\,6.3]{GOMS}.

The assignment
$$
A\longmapsto \End_A^{\rm c,alg}\big(\LL^n(A)\big)
$$
is a functor, which we denote by $\Ec nd^{\rm \,c,alg}(\LL^n)$, see~\cite[\S\,5]{GOT2}. This functor is also represented by an ind-affine scheme which is a product of an ind-flat ind-affine scheme over $\z$ and a thick ind-cone, see~\cite[Prop.\,5.1]{GOT2}.

Given a functor on the category of rings which is represented by an ind-scheme, we use the same notation for the representing ind-scheme as for the initial functor.

\medskip

By~\cite[Prop.\,8.15]{GOMS}, there is a unique multilinear (anti)symmetric map
$$
\sgn\,:\,(\z^n)^{\times(n+1)}\longrightarrow \z/2\z
$$
such that for all $l_1,\ldots,l_n\in \z^n$, we have $\sgn(l_1,l_1,l_2,\ldots,l_n)\equiv\det(l_1,l_2,\ldots,l_n)\pmod{2}$. It follows that the map $\sgn$ is invariant under automorphisms of~$\z^n$. One has (equivalent) explicit formulas for the map $\sgn$, see the proof of~\cite[Prop.\,8.15]{GOMS} and \cite[Rem.\,8.16]{GOMS}.

By~\cite[Prop.\,8.22, Rem.\,8.21]{GOMS} (see also~\cite{GO1}), for any $\Q$-algebra $A$, there is a unique multilinear antisymmetric map
$$
CC_n\,:\,\big(\LL^n(A)^*\big)^{\times (n+1)}\lrto A^*
$$
that satisfies the following properties:
\begin{itemize}
\item[(i)]
if $f_1\in (\lo^n\gm)^{\sharp}(A)$, then
\begin{equation}\label{eq:CC3}
CC_n(f_1,f_2,\ldots,f_{n+1})=\exp\,\res\left(\log(f_1)\,\frac{df_2}{f_2}\wedge\ldots\wedge
\frac{df_{n+1}}{f_{n+1}}\right)\,,
\end{equation}
where the standard series $\exp$ is applied to a nilpotent element of the ring $A$;
\item[(ii)]
if $f_1\in A^*$, then
\begin{equation}\label{eq:CC1}
CC_n(f_1,f_2\ldots,f_{n+1})=f_1^{\,\det(\nu(f_2),\,\ldots,\,\nu(f_{n+1}))}\,,
\end{equation}
\item[(iii)]
for all ${{\underline{l}_1},\ldots,{\underline{l}_{n+1}}\in\uz(A)}$, there is an equality
\begin{equation}\label{eq:CC2}
CC_n(t^{\underline{l}_1},\ldots,t^{\underline{l}_{n+1}})=(-1)^{\sgn({\underline{l}_1},\,\ldots,\,{\underline{l}_{n+1}})}\,.
\end{equation}
\end{itemize}

It is not not hard to deduce from this definition of $CC_n$ that for all elements ${f_1,\ldots,f_n\in\LL^n(A)^*}$, there is an equality
\begin{equation}  \label{eq:easy_St}
CC_n(f_1, f_1, f_2, \ldots, f_n) = (-1)^{\det(\nu(f_1),\,\ldots,\,\nu(f_n))}\,.
\end{equation}
Indeed, by~\cite[Lem.\,8.24]{GOMS}, we have that $CC_n(f_1, f_1,f_2, \ldots, f_n)= CC_n(-1, f_1, \ldots, f_n)$ and then we apply formula~\eqref{eq:CC1}.

\medskip

Clearly, the map~$CC_n$ is functorial with respect to a $\Q$-algebra~$A$, that is, we have a morphism of functors
$CC_n\colon (L^n\gm)^{\times(n+1)}_{\Q}\to (\gm)_{\Q}$ on the category of $\Q$-algebras. Since the functor~$L^n\gm$ is represented by a product of an ind-flat ind-affine scheme over $\z$ and a thick ind-cone, the above discussion on such ind-schemes implies that there is at most one extension of the above map $CC_n$ to a morphism of functors $(L^n\gm)^{\times (n+1)}\to\gm$ on the category of all rings. In addition, this extension must be automatically multilinear, antisymmetric and must satisfy formula~\eqref{eq:easy_St}, which also follows from the theory of thick ind-cones.

Actually, such extension does exist. Namely, based on the works of Grayson~\cite{Gr} and Kato~\cite{K1}, one constructs a boundary map between algebraic $K$-groups
$$
\partial_{m+1}\,:\,K_{m+1}\big(\LL(A)\big)\longrightarrow K_m(A)\,,\qquad m\geqslant 0\,,
$$
which is functorial with respect to a ring $A$, see~\cite[\S\,7]{OZ1} and~\cite[\S\,7.3]{GOMS}. Recall that for any ring~$B$, there is a surjective homomorphism ${\det\colon K_1(B)\to B^*}$, which has a section given by a canonical embedding $B^*\subset K_1(B)$. This allows to define the composition
\begin{equation}\label{eq:K}
\begin{CD}
\big(\LL^n(A)^*\big)^{\times (n+1)}@>>> K_1\big(\LL^n(A)\big)^{\times (n+1)}@>>>\\@>>> K_{n+1}\big(\LL^n(A)\big)@>{\partial_2  \cdot \ldots \cdot \partial_{n+1}}>>  K_1(A)
@>\det>> A^*\,,
\end{CD}
\end{equation}
where the second map is the product between algebraic $K$-groups.

A non-trivial theorem is that if $A$ is a $\Q$-algebra, then the composition in formula~\eqref{eq:K} coincides with the map defined above explicitly by formulas~\eqref{eq:CC3},~\eqref{eq:CC1}, and~\eqref{eq:CC2}, see~\cite[Theor.\,8.17]{GOMS}. Thus we denote also by $CC_n$ the morphism of functors ${(L^n\gm)^{\times (n+1)}\to\gm}$ obtained from formula~\eqref{eq:K} and call it a higher-dimensional Contou-Carr\`ere symbol.

\section{Invariance of the higher-dimensional Contou-Carr\`ere symbol}

\begin{theor}\label{theor:invCC}
Let $\phi\colon \LL^n(A)\to\LL^n(A)$ be a continuous endomorphism of the \mbox{$A$-algebra}~$\LL^n(A)$. Then for all elements $f_1,\ldots,f_{n+1}\in\LL^n(A)^*$, there is an equality in $A^*$
\begin{equation}\label{eq:invCC}
CC_n\big(\phi(f_1),\ldots,\phi(f_{n+1})\big)=CC_n(f_1,\ldots,f_{n+1})^{d(\phi)}\,.
\end{equation}
\end{theor}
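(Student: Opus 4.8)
The plan is to reduce \eqref{eq:invCC} to the case of $\Q$-algebras and then to verify it there by means of the explicit formulas \eqref{eq:CC3}, \eqref{eq:CC1}, \eqref{eq:CC2}. For the reduction I would view both sides as morphisms of functors on the category of rings with source $\Ec nd^{\rm c,alg}(\LL^n)\times(L^n\gm)^{\times(n+1)}$ and target $\gm$, the left-hand side being the composition of the action $(\phi,f_\bullet)\mapsto(\phi(f_1),\dots,\phi(f_{n+1}))$ with $CC_n$. The only delicate point is the exponent $d(\phi)\in\uz(A)$ on the right-hand side; I would remove it by decomposing the source according to the pointwise-constant value of the matrix $\Upsilon(\phi)$, so that on each piece $d(\phi)$ is a genuine integer and the right-hand side becomes the honest morphism $CC_n(f_\bullet)^{d(\phi)}$. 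Since $\Ec nd^{\rm c,alg}(\LL^n)$ and $L^n\gm$ are represented by products of an ind-flat ind-affine scheme over $\z$ and a thick ind-cone, so is the source by \cite[Lem.\,5.13]{GOMS}, and the injectivity $\OO(X)\hookrightarrow\OO(X_\Q)$ of \cite[Prop.\,5.17]{GOMS} reduces the whole statement to $\Q$-algebras.

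Over a $\Q$-algebra $A$ both sides of \eqref{eq:invCC} are multilinear and antisymmetric in $(f_1,\dots,f_{n+1})$: the left-hand side because $\phi$ is a homomorphism and $CC_n$ is multilinear antisymmetric, the right-hand side because raising to the fixed power $d(\phi)$ is an endomorphism of $\gm$. Using the decomposition \eqref{eq:decv} I would therefore reduce to the case in which each $f_i$ is one of three types of generator: a monomial $t^{\underline l_i}$, a constant $c_i\in A^*$, or an element of $(L^n\gm)^{\sharp}$, and then distinguish three cases.

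If some $f_i$ lies in $(L^n\gm)^{\sharp}$, I move it to the first slot by antisymmetry and apply \eqref{eq:CC3}. The point is that a continuous $A$-algebra endomorphism commutes with the operations involved: $\phi$ preserves $(L^n\gm)^{\sharp}$ and satisfies $\phi(\log f)=\log(\phi(f))$ there, while $\phi\big(\tfrac{df}{f}\big)=\tfrac{d\phi(f)}{\phi(f)}$ for every $f$. Hence $CC_n(\phi(f_\bullet))=\exp\res\big(\phi(\omega)\big)$ with $\omega=\log(f_1)\,\tfrac{df_2}{f_2}\wedge\dots\wedge\tfrac{df_{n+1}}{f_{n+1}}$, and \eqref{eq:invres} turns the factor $d(\phi)$ into the exponent. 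If instead no $f_i$ is of type $\sharp$ but some $f_i$ is a constant, I move it to the first slot and apply \eqref{eq:CC1}, using $\phi(c)=c$ together with the equality $\det\big(\nu(\phi(f_2)),\dots,\nu(\phi(f_{n+1}))\big)=d(\phi)\det\big(\nu(f_2),\dots,\nu(f_{n+1})\big)$, which follows from \eqref{eq:invnu}.

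The remaining and hardest case is the all-monomial one, $f_i=t^{\underline l_i}$, since $\phi(t^{\underline l_i})$ is not a monomial and \eqref{eq:CC2} cannot be applied to the left-hand side directly. Here I would argue by uniqueness. The maps $G(\underline l_1,\dots,\underline l_{n+1})=CC_n\big(\phi(t^{\underline l_1}),\dots,\phi(t^{\underline l_{n+1}})\big)$ and $H(\underline l_1,\dots,\underline l_{n+1})=CC_n(t^{\underline l_1},\dots,t^{\underline l_{n+1}})^{d(\phi)}$ are both multilinear and antisymmetric, and take values in $\{\pm1\}$ because any $n+1$ basis vectors of $\z^n$ repeat. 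By \eqref{eq:easy_St} and \eqref{eq:invnu} both $G$ and $H$ equal $(-1)^{d(\phi)\det(\underline l,\underline l_2,\dots,\underline l_n)}$ on every diagonal tuple $(\underline l,\underline l,\underline l_2,\dots,\underline l_n)$. Thus $G\cdot H^{-1}$ is a $\z/2\z$-valued multilinear symmetric map that is alternating in its first two arguments, hence alternating in all of them; as $n+1>n$ it factors through $\bigwedge^{n+1}(\z/2\z)^{n}=0$ and is trivial, so $G=H$. This uniqueness step, which is exactly the mechanism behind \cite[Prop.\,8.15]{GOMS} for the map $\sgn$, is the part I expect to demand the most care.
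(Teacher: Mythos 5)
Your proposal is correct in substance and follows the same skeleton as the paper's proof: reduction to $\Q$-algebras via injectivity of $\OO(X)\to\OO(X_{\Q})$ for the ind-scheme $X=\Ec nd^{\rm \,c,alg}(\LL^n)\times(L^n\gm)^{\times(n+1)}$, followed by the three-case analysis coming from decompositions \eqref{eq:dec1} and \eqref{eq:dec2}. Your $(L^n\gm)^{\sharp}$-case is the paper's case (i) verbatim. The two local divergences are these. In the constant case you obtain $\det\big(\nu(\phi(f_2)),\ldots,\nu(\phi(f_{n+1}))\big)=d(\phi)\det\big(\nu(f_2),\ldots,\nu(f_{n+1})\big)$ directly from \eqref{eq:invnu} and multiplicativity of $\det$, whereas the paper passes through \cite[Prop.\,8.4]{GOMS} and \eqref{eq:invres}; your route is shorter and equally valid. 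In the all-monomial case the paper reduces, by multilinearity and antisymmetry, to tuples of the form $(t_{p_1},t_{p_1},t_{p_2},\ldots,t_{p_n})$ (a repetition is forced since $n+1>n$) and then applies \cite[Lem.\,8.24]{GOMS} to replace the repeated pair by $(-1,t_{p_1},\ldots,t_{p_n})$, landing back in case (ii); you instead verify agreement of $G$ and $H$ on diagonal tuples via \eqref{eq:easy_St} and \eqref{eq:invnu}, and kill $GH^{-1}$ by the universal property of exterior powers, $\Lambda^{n+1}$ of a free rank-$n$ module being zero. Since \eqref{eq:easy_St} is itself deduced from \cite[Lem.\,8.24]{GOMS}, the two mechanisms rest on the same input; yours is a more formal packaging (indeed the one behind \cite[Prop.\,8.15]{GOMS}), the paper's a more direct computation. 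A genuine merit of your write-up is the explicit treatment of the exponent $d(\phi)\in\uz(A)$ in the functorial reduction, by decomposing the source according to the constant value of $\Upsilon(\phi)$; the paper glosses over this point.

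One step needs patching. In the monomial case you treat $\underline{l}_1,\ldots,\underline{l}_{n+1}$ as elements of $\z^n$, but a priori they lie in $\uz^n(A)$, i.e., they are locally constant $\z^n$-valued functions on $\Spec(A)$. As stated over $\z$, your exterior-power argument then fails: as a $\z$-module, $\uz^n(A)$ can have rank greater than $n$ (e.g. for $A=\Q\times\Q$), so $\Lambda^{n+1}_{\z}\big(\uz^n(A)\big)\neq 0$ and $GH^{-1}$ need not factor through a vanishing module, nor does ``$n+1$ basis vectors must repeat'' apply. The fix is the one the paper makes explicit in \eqref{eq:decompring}: decompose $A$ into a finite product of rings on which all the $\underline{l}_i$ (and $d(\phi)$) are constant, and use functoriality of both sides of \eqref{eq:invCC}; alternatively, establish $\uz(A)$-multilinearity of $G$ and $H$ and use $\Lambda^{n+1}_{\uz(A)}\big(\uz(A)^n\big)=0$. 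Since you already invoke exactly this decomposition trick for $d(\phi)$ in your first step, this is a routine repair rather than a flaw in the approach.
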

\begin{proof}
Both sides of formula~\eqref{eq:invCC} are regular functions on the ind-affine scheme
${X:=\Ec nd^{\rm \,c, alg}\big(\LL^n)\times (L^n\gm)^{\times(n+1)}}$. The scheme $X$ is isomorphic to a product of an ind-flat ind-affine scheme over $\z$ and a thick ind-cone. Therefore, by results mentioned in Section~\ref{sec:prel}, the natural homomorphism between algebras of regular functions $\OO(X)\to\OO(X_{\Q})$ is injective. Hence it is enough to prove formula~\eqref{eq:invCC} when $A$ is a $\Q$-algebra, which we assume from now on.

Further, both sides of formula~\eqref{eq:invCC} are multilinear and antisymmetric with respect to~$f_1,\ldots,f_{n+1}$. Thus decompositions~\eqref{eq:dec1} and~\eqref{eq:dec2} imply that it is enough to prove formula~\eqref{eq:invCC} in the following three cases:
\begin{itemize}
\item[(i)]
we have that $f_1\in (L^n\gm)^{\sharp}(A)$;
\item[(ii)]
we have that $f_1\in A^*$;
\item[(iii)]
there is a collection $\underline{l}_1,\ldots,\underline{l}_{n+1}\in\uz^n(A)$ such that $f_i=t^{\underline{l}_i}$, where $1\leqslant i\leqslant n+1$.
\end{itemize}

Suppose that condition~(i) holds. Since $\phi$ is continuous, the topological characterization of the group $(L^n\gm)^{\sharp}(A)$ given in Section~\ref{sec:prel} implies that $\phi$ preserves $(L^n\gm)^{\sharp}(A)$, whence $\phi(f_1)\in (L^n\gm)^{\sharp}(A)$. Consequently, by formulas~\eqref{eq:CC3} and~\eqref{eq:invres}, the left hand side of~\eqref{eq:invCC} is equal to
$$
\exp\,\res\left(\log(\phi(f_1))\,\frac{d\phi(f_2)}{\phi(f_2)}\wedge\ldots\wedge
\frac{d\phi(f_{n+1})}{\phi(f_{n+1})}\right)=
\exp\,\res\left(\phi\left(\log(f_1)\,\frac{df_2}{f_2}\wedge\ldots\wedge
\frac{df_{n+1}}{f_{n+1}}\right)\right)=
$$
$$
=\exp\,\left(d(\phi)\res\left(\log(f_1)\,\frac{df_2}{f_2}\wedge\ldots\wedge
\frac{df_{n+1}}{f_{n+1}}\right)\right)=\exp\,\left(\res\left(\log(f_1)\,\frac{df_2}{f_2}\wedge\ldots\wedge
\frac{df_{n+1}}{f_{n+1}}\right)\right)^{d(\phi)}\,.
$$
Again by formula~\eqref{eq:CC3}, this equals to the right hand side of~\eqref{eq:invCC}.

Now suppose that condition~(ii) holds. Since $\phi$ is $A$-linear, we have that ${\phi(f_1)=f_1}$. By~\cite[Prop.\,8.4]{GOMS}, there are equalities
$$
\det\big(\nu(f_2),\,\ldots,\,\nu(f_{n+1})\big)=\res \left(\frac{df_2}{f_2}  \wedge \ldots \wedge \frac{df_{n+1}}{f_{n+1}} \right)\,,
$$
$$
\det\big(\nu(\phi(f_2)),\,\ldots,\,\nu(\phi(f_{n+1}))\big)=\res \left(\frac{d\phi(f_2)}{\phi(f_2)}  \wedge \ldots \wedge \frac{d\phi(f_{n+1})}{\phi(f_{n+1})} \right)\,.
$$
By formulas~\eqref{eq:invres} and~\eqref{eq:CC1}, this proves~\eqref{eq:invCC} when $f_1\in A^*$.

Finally, suppose that condition~(iii) holds. There is a decomposition into a finite product of rings
\begin{equation}\label{eq:decompring}
\mbox{$A\simeq\prod\limits_{j=1}^N A_j$}
\end{equation}
such that for all $i$ and $j$ with $1\leqslant i\leqslant n+1$, $1\leqslant j\leqslant N$, the restriction of the function $\underline{l}_i$ to~$\Spec(A_j)$ is constant. Since both sides of formula~\eqref{eq:invCC} are functorial with respect to the terms of decomposition~\eqref{eq:decompring} (actually, with respect to any $A$-algebra), it is enough to prove formula~\eqref{eq:invCC} for each ring $A_j$ separately. Thus we may assume that all $\underline{l}_i$ are constant, that is, are elements of $\z^n\subset\uz^n(A)$.

It follows from multilinearity and the antisymmetric property that it is enough to consider the case when the collection $(f_1,\ldots,f_{n+1})$ is of type $(t_{p_1},t_{p_1},t_{p_2},\ldots,t_{p_n})$ for some $1\leqslant p_1,\ldots,p_n\leqslant n$. By~\cite[Lem.\,8.24]{GOMS}, the values of both sides of formula~\eqref{eq:invCC} at such collection coincide with their values at the collection $(-1,t_{p_1},t_{p_2},\ldots,t_{p_n})$. Thus we are reduced to the case~(ii) considered above.

\end{proof}

\begin{corol}\label{cor:invCC}
If $\phi$ is a continuous automorphism of the $A$-algebra $\LL^n(A)$, then for all elements $f_1,\ldots,f_{n+1}\in\LL^n(A)^*$, there is an equality in $A^*$
$$
CC_n\big(\phi(f_1),\ldots,\phi(f_{n+1})\big)=CC_n(f_1,\ldots,f_{n+1})\,.
$$
\end{corol}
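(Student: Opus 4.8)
The plan is to deduce Corollary~\ref{cor:invCC} as an immediate special case of Theorem~\ref{theor:invCC}. The only thing one needs to observe is that when $\phi$ is a continuous \emph{automorphism}, the integer invariant $d(\phi)$ appearing in formula~\eqref{eq:invCC} equals $1$, so that the exponent disappears and the two sides of~\eqref{eq:invCC} become literally equal.

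First I would invoke the invertibility criterion from the preliminaries. By~\cite[Theor.\,6.8]{GOT2}, as recalled in Section~\ref{sec:prel}, a continuous endomorphism $\phi\in\End^{\rm c,alg}_A\big(\LL^n(A)\big)$ is invertible if and only if the matrix $\Upsilon(\phi)=\big(\nu(\phi(t_1)),\ldots,\nu(\phi(t_n))\big)$ is invertible, and this happens precisely when $d(\phi)=\det\big(\Upsilon(\phi)\big)=1$. Hence, since our $\phi$ is assumed to be a continuous automorphism of the $A$-algebra $\LL^n(A)$, we have $d(\phi)=1$ in $\uz(A)$.

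Then I would simply substitute $d(\phi)=1$ into the conclusion of Theorem~\ref{theor:invCC}. For all $f_1,\ldots,f_{n+1}\in\LL^n(A)^*$, formula~\eqref{eq:invCC} gives
$$
CC_n\big(\phi(f_1),\ldots,\phi(f_{n+1})\big)=CC_n(f_1,\ldots,f_{n+1})^{d(\phi)}=CC_n(f_1,\ldots,f_{n+1})\,,
$$
which is exactly the asserted equality in $A^*$. This completes the argument.

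There is essentially no obstacle here: the corollary is a direct corollary in the strict sense, and the entire content has already been established in the theorem. The only point requiring any care is the bookkeeping identity $d(\phi)=1$ for automorphisms, and even that is quoted verbatim from~\cite[Theor.\,6.8]{GOT2}. In particular, no new use of the thick-ind-cone reduction to $\Q$-algebras, of the residue formula~\eqref{eq:invres}, or of the case analysis in the proof of Theorem~\ref{theor:invCC} is needed, since all of that work is packaged inside the statement being specialized.
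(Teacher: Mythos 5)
Your proposal is correct and coincides with the paper's own proof: the paper likewise observes that invertibility of $\phi$ forces $d(\phi)=1$ (by the criterion from~\cite[Theor.\,6.8]{GOT2} recalled in Section~\ref{sec:prel}) and then specializes Theorem~\ref{theor:invCC}. No further comment is needed.
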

\begin{proof}
Since the endomorphism $\phi$ is invertible, we have that $d(\phi)=1$, see Section~\ref{sec:prel}. Thus the corollary follows from Theorem~\ref{theor:invCC}.
\end{proof}

\begin{rmk} \label{rmk:general}
Theorem~\ref{theor:invCC} has the following generalization. Let $\phi\colon\LL^n(A)\to\LL^m(A)$ be a continuous homomorphism of $A$-algebras. In particular, by~\cite[Cor.\,4.8]{GOT2}, we have that~${n\leqslant m}$. One defines invariants $\sgn(\phi)$ and $d(\phi)$ in $\uz(A)$, see~\cite[\S\,5]{GOT2} (when $m=n$, we have that $\sgn(\phi)=1$ and $d(\phi)$ is $\det\big(\Upsilon(\phi)\big)$ as above). The homomorphism $\phi$ defines also a collection $q_1,\ldots,q_{m-n}$ of elements from $\uz(A)$ such that for any form $\omega\in\widetilde{\Omega}^n_{\LL^n(A)}$, one has the following equality, see~\cite[Prop.\,5.3]{GOT2}:
$$
\res\left(\phi(\omega)\wedge\frac{dt_{q_1}}{t_{q_1}}\wedge\ldots\wedge\frac{dt_{q_{m-n}}}{t_{q_{m-n}}}\right)=\sgn(\phi)d(\phi)\res(\omega)\,.
$$
A similar argument as in the proof of Theorem~\ref{theor:invCC} shows that for all elements $f_1,\ldots,f_{n+1}\in\LL^n(A)^*$, there is an equality in $A^*$
$$
CC_m\big(\phi(f_1),\ldots,\phi(f_{n+1}),t_{q_1},\ldots,t_{q_{m-n}}\big)=CC_n(f_1,\ldots,f_{n+1})^{\sgn(\phi)d(\phi)}\,.
$$
\end{rmk}

\quash{
\begin{rmk}
It is not clear how to deduce Corollary~\ref{cor:invCC} directly from the $K$-theoretic definition of the Contou-Carr\`ere symbol, without using the explicit formula for it. However, it is not hard to deduce the following weaker statement. Given a ring $B$, let $\psi\colon \LL(B)\to\LL(B)$ be a continuous automorphism (not necessarily $B$-linear) such that $\psi(B)\subset B$ and ${\psi(t)\in B[[t]]\subset B((t))=\LL(B)}$. Then, using the explicit definition of the boundary map between algebraic $K$-groups, see~\cite[\S\,7]{OZ1} or~\cite[\S\,7.3]{GOMS}, one shows easily that the map $\partial_m\colon K_{m+1}\big(\LL(B)\big)\to K_m(B)$ commutes with the automorphism $\psi$. By induction, this implies that the higher-dimensional Contou-Carr\`ere symbol is invariant under continuous automorphism of the $A$-algebra $\LL^n(A)$ for all automorphisms $\phi$ such that for any $i$, $1\leqslant i\leqslant n$, we have
$$
\phi(t_i)\in A((t_1))\ldots((t_{i-1}))[[t_i]]\,.
$$
For an arbitrary ring $A$ with nilpotents, these are very special automorphisms, which follows from the explicit description of all continuous automorphisms in terms of the values $\phi(t_i)$ given in Section~\ref{sec:prel}.
\end{rmk}
}

\medskip

Now let us show that if an automorphism of the $A$-algebra $\LL^n(A)$ is not continuous, then, in general, it does not preserve the higher-dimensional Contou-Carr\`ere symbol.

Let $\phi$ be an automorphism of the $A$-algebra $\LL^n(A)$ (not necessarily continuous). Then~$\phi$ defines naturally an $A$-linear automorphism of~$\Omega^n_{\LL^n(A)/A}$, which we denote also by $\phi$ for simplicity. Let $\varepsilon$ be a formal variable that satisfies $\varepsilon^2=0$. By $\phi_{\varepsilon}$ denote the induced automorphism of the $A[\varepsilon]$-algebra $\LL^n\big(A[\varepsilon]\big)\simeq \LL^n(A)[\varepsilon]$.

\begin{prop}\label{prop:noninv}
Assume that there is a differential form $\omega\in\Omega^n_{\LL^n(A)/A}$ such that
$$
\res\big(\phi(\omega)\big)\ne \res(\omega)\,.
$$
Then there is a collection ${f_1,\ldots,f_{n+1}\in \LL^n\big(A[\varepsilon]\big)^*}$ such that
$$
CC_n\big(\phi_{\varepsilon}(f_1),\ldots,\phi_{\varepsilon}(f_{n+1})\big)\ne CC_n(f_1,\ldots,f_{n+1})\,.
$$
\end{prop}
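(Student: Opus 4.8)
The plan is to produce an explicit bad collection directly from $\omega$, using the square-zero variable $\varepsilon$ to realise the residue as the infinitesimal part of the symbol. Writing the image of $\omega$ in $\widetilde{\Omega}^n_{\LL^n(A)}$ as $g\,dt_1\wedge\ldots\wedge dt_n$ with $g\in\LL^n(A)$, I set $h:=t_1\cdots t_n\,g$, so that in $\widetilde{\Omega}^n_{\LL^n(A)}$ one has $h\,\frac{dt_1}{t_1}\wedge\ldots\wedge\frac{dt_n}{t_n}=g\,dt_1\wedge\ldots\wedge dt_n$ and hence $\res\!\big(h\,\frac{dt_1}{t_1}\wedge\ldots\wedge\frac{dt_n}{t_n}\big)=\res(\omega)$. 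I then take
$$
f_1:=1+\varepsilon h,\qquad f_{i+1}:=t_i\quad(1\leqslant i\leqslant n),
$$
as elements of $\LL^n(A[\varepsilon])^*$; here $f_1$ is invertible with inverse $1-\varepsilon h$, and $(f_1-1)^2=\varepsilon^2h^2=0$ shows $f_1\in (L^n\gm)^{\sharp}(A[\varepsilon])$. Since $\phi_\varepsilon$ is $A[\varepsilon]$-linear and fixes $\varepsilon$, we also have $\phi_\varepsilon(f_1)=1+\varepsilon\phi(h)\in (L^n\gm)^{\sharp}(A[\varepsilon])$ and $\phi_\varepsilon(t_i)=\phi(t_i)$.

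The computational heart is the denominator-free residue formula
\begin{equation}\label{eq:plan-res}
CC_n(1+\varepsilon u,\,g_2,\ldots,g_{n+1})=1+\varepsilon\,\res\!\Big(u\,\tfrac{dg_2}{g_2}\wedge\ldots\wedge\tfrac{dg_{n+1}}{g_{n+1}}\Big),
\end{equation}
to be established over an arbitrary ring $B$ for $u\in\LL^n(B)$ and $g_2,\ldots,g_{n+1}\in\LL^n(B)^*$, all inputs viewed in $\LL^n(B[\varepsilon])^*$. Over a $\Q$-algebra this is immediate from formula~\eqref{eq:CC3}: as $(\varepsilon u)^2=0$ we have $\log(1+\varepsilon u)=\varepsilon u$ and $\exp(\varepsilon c)=1+\varepsilon c$, so no denominators occur. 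To reach an arbitrary ring I would run the reduction used in the proof of Theorem~\ref{theor:invCC}: both sides of~\eqref{eq:plan-res} are regular functions on the ind-affine scheme $Y$ representing the functor $B\mapsto\LL^n(B)\times\big(\LL^n(B)^*\big)^{n}$ — the left-hand side being the $\varepsilon$-component of the morphism $(u,g_2,\ldots,g_{n+1})\mapsto CC_n(1+\varepsilon u,g_2,\ldots,g_{n+1})$, which reduces to the constant $1$ modulo $\varepsilon$. Since $B\mapsto\LL^n(B)$ is ind-flat over $\z$ and $(L^n\gm)^{\times n}$ is a product of an ind-flat ind-affine scheme over $\z$ and a thick ind-cone, $Y$ is again of this form, so $\OO(Y)\to\OO(Y_{\Q})$ is injective and agreement over $\Q$-algebras forces agreement in general.

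Granting~\eqref{eq:plan-res}, the conclusion is a short calculation. Applying it with $u=h$ and $g_{i+1}=t_i$ gives $CC_n(f_1,\ldots,f_{n+1})=1+\varepsilon\res(\omega)$. Applying it with $u=\phi(h)$ and $g_{i+1}=\phi(t_i)$ gives
$$
CC_n\big(\phi_\varepsilon(f_1),\ldots,\phi_\varepsilon(f_{n+1})\big)=1+\varepsilon\,\res\!\Big(\phi(h)\,\tfrac{d\phi(t_1)}{\phi(t_1)}\wedge\ldots\wedge\tfrac{d\phi(t_n)}{\phi(t_n)}\Big).
$$
Because $\phi$ is an algebra homomorphism commuting with $d$, the form inside this residue is precisely $\phi$ applied to $h\,\frac{dt_1}{t_1}\wedge\ldots\wedge\frac{dt_n}{t_n}$, that is, to the image of $\omega$ in $\widetilde{\Omega}^n_{\LL^n(A)}$; using the $\phi$-equivariance of the projection $\Omega^n_{\LL^n(A)/A}\to\widetilde{\Omega}^n_{\LL^n(A)}$ and the definition of $\res$ on $\Omega^n_{\LL^n(A)/A}$, that residue equals $\res(\phi(\omega))$. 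Thus the two symbols are $1+\varepsilon\res(\omega)$ and $1+\varepsilon\res(\phi(\omega))$; since $A[\varepsilon]=A\oplus A\varepsilon$ and $\res(\omega)\neq\res(\phi(\omega))$ in $A$, these are distinct in $A[\varepsilon]^*$, as required.

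I expect the one genuinely non-formal point to be the justification of~\eqref{eq:plan-res} over an arbitrary ring: one must check that $(u,g_2,\ldots,g_{n+1})\mapsto CC_n(1+\varepsilon u,g_2,\ldots,g_{n+1})$ is a morphism of functors landing in $1+\varepsilon\,\ga$, so that extracting its $\varepsilon$-part is a regular function on $Y$, and that $Y$ belongs to the class of ind-schemes for which $\OO(Y)\hookrightarrow\OO(Y_{\Q})$. Both follow from the formalism recalled in Section~\ref{sec:prel}; everything after~\eqref{eq:plan-res} is routine manipulation of residues and the $\varepsilon$-calculus.
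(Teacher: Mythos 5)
Your construction breaks at the step where you pass from $\res\bigl(\phi(h)\,\tfrac{d\phi(t_1)}{\phi(t_1)}\wedge\ldots\wedge\tfrac{d\phi(t_n)}{\phi(t_n)}\bigr)$ to $\res\bigl(\phi(\omega)\bigr)$ by invoking ``the $\phi$-equivariance of the projection $\Omega^n_{\LL^n(A)/A}\to\widetilde{\Omega}^n_{\LL^n(A)}$''. That equivariance holds for \emph{continuous} $\phi$ but fails in general: a non-continuous automorphism need not satisfy the chain rule $\tfrac{\partial \phi(f)}{\partial t_j}=\sum_i\phi\bigl(\tfrac{\partial f}{\partial t_i}\bigr)\tfrac{\partial\phi(t_i)}{\partial t_j}$, so $\phi$ does not preserve the kernel of $\Omega^n_{\LL^n(A)/A}\to\widetilde{\Omega}^n_{\LL^n(A)}$ and induces no map at all on $\widetilde{\Omega}^n_{\LL^n(A)}$. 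Consequently $\res\circ\phi$ is \emph{not} constant on the fibres of this projection: replacing $\omega$ by its standard representative $g\,dt_1\wedge\ldots\wedge dt_n$ preserves $\res$ but can change $\res\circ\phi$. Since a continuous automorphism has $d(\phi)=1$ and therefore preserves $\res$ by formula~\eqref{eq:invres}, the hypothesis of the proposition forces $\phi$ to be non-continuous, so your gap occurs in exactly the case the proposition is about. Example~\ref{ex:noninv} makes the failure concrete: there $\omega=\tfrac{dt_1}{t_1}\wedge\tfrac{df}{t_2}$ has zero image in $\widetilde{\Omega}^2$, so your recipe gives $g=0$, $h=0$, $f_1=1$, and both of your symbols equal $1$ (your collection detects nothing), although $\res\bigl(\phi(\omega)\bigr)=1\neq 0=\res(\omega)$.

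The paper avoids choosing any representative adapted to $\widetilde{\Omega}^n$. It argues by contradiction: if $\phi_\varepsilon$ preserved all symbols, then by formula~\eqref{eq:CCres} (which is your formula~$(\ast)$, quoted from~\cite[Prop.\,8.19]{GOMS}, so your ind-cone re-derivation of it is not needed) the map $\phi$ would preserve $\res$ on every form $g\,\tfrac{df_1}{f_1}\wedge\ldots\wedge\tfrac{df_n}{f_n}$ with $f_i$ invertible; since $\LL^n(A)$ is additively generated by its invertible elements (see~\cite[Ex.\,2.3(iii)]{GOMilnor}), such forms additively generate all of $\Omega^n_{\LL^n(A)/A}$, whence $\res\circ\phi=\res$ on $\Omega^n_{\LL^n(A)/A}$, contradicting the hypothesis. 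If you want to keep your explicit style, the repair is to decompose the given $\omega$ itself (not its class in $\widetilde{\Omega}^n_{\LL^n(A)}$) as a finite sum $\sum_j g_j\,\tfrac{df_{1,j}}{f_{1,j}}\wedge\ldots\wedge\tfrac{df_{n,j}}{f_{n,j}}$ with all $f_{i,j}$ invertible; by additivity of $\res$ and of $\res\circ\phi$, at least one summand has $\res\circ\phi\neq\res$, and then $(1+g_j\varepsilon,f_{1,j},\ldots,f_{n,j})$ is the required collection, exactly as in Example~\ref{ex:noninv}.
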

\begin{proof}
Suppose that $\phi_{\varepsilon}$ preserves the higher-dimensional Contou-Carr\`ere symbol. By~\cite[Prop.\,8.19]{GOMS}, for any collection $f_1,\ldots,f_n\in\LL^n(A)^*$ and any element $g\in\LL^n(A)$, there is an equality
\begin{equation}\label{eq:CCres}
CC_n (1+g\,\varepsilon,f_1,\ldots,f_n)=1+\mathop{\rm res}\left(g\,\frac{df_1}{f_1}\wedge\ldots\wedge\frac{df_n}{f_n}\right)\varepsilon\,,
\end{equation}
where $CC_n$ is applied to a collection of invertible elements of the ring $\LL^n\big(A[\varepsilon]\big)$. Thus we see that $\phi$ preserves the residue in the right hand side of formula~\eqref{eq:CCres}.

Note that $\LL^n(A)$ is additively generated by invertible elements, see~\cite[Ex.\,2.3(iii)]{GOMilnor}. It follows that the additive group $\Omega^n_{\LL^n(A)/A}$ is generated by differential forms as in the right hand side of formula~\eqref{eq:CCres}. Hence $\phi$ preserves the residue of all differential forms in~$\Omega^n_{\LL^n(A)/A}$, which contradicts the assumption of the proposition.
\end{proof}

\begin{examp} \label{ex:noninv}
Yekutieli has given in~\cite[Ex.\,2.4.24]{Y} the following construction of a non-continuous automorphism~$\phi$ of the field~$k((t_1))((t_2))$ over a field $k$ of zero characteristic such that $\phi$ does not preserve the residue. Let $f\in k((t_1))$ be an element which is transcendental over the subfield $k(t_1)\subset k((t_1))$. It follows from Hensel's lemma that there is a (non-unique) section $\phi_0\colon k((t_1))\to k((t_1))[[t_2]]$ of the natural homomorphism ${k((t_1))[[t_2]]\to k((t_1))}$ such that $\phi_0$ is identical on $k(t_1)$ and $\phi_0(f)=f+t_2$. One shows directly that $\phi_0$ extends to an automorphism of the ring $k((t_1))[[t_2]]$ that sends $t_2$ to itself. Thus we obtain an automorphism $\phi$ of the field $k((t_1))((t_2))$ over $k$ such that
$$
\phi(t_1)=t_1\,,\qquad \phi(t_2)=t_2\,,\qquad \phi(f)=f+t_2\,.
$$
It follows that
$$
\phi\left(\frac{dt_1}{t_1}\wedge\frac{df}{t_2}\right)=\frac{dt_1}{t_1}\wedge\frac{df}{t_2}+\frac{dt_1}{t_1}\wedge\frac{dt_2}{t_2}\,.
$$
Since $f\in k((t_1))$, the image of $\frac{dt_1}{t_1}\wedge\frac{df}{t_2}$ under the natural map ${\Omega^2_{k((t_1))((t_2))/k}\to \widetilde{\Omega}^2_{k((t_1))((t_2))}}$ equals zero. Hence we have
$$
\res\left(\frac{dt_1}{t_1}\wedge\frac{df}{t_2}\right)=0\,,\qquad \res\,\phi\left(\frac{dt_1}{t_1}\wedge\frac{df}{t_2}\right)=1\,.
$$
Therefore by our Proposition~\ref{prop:noninv}, the automorphism $\phi_{\varepsilon}$ does not preserve the two-dimensional Contou-Carr\`ere symbol over the ring $k[\varepsilon]((t_1))((t_2))$. Explicitly, we have that
$$
CC_2\left(1+\phi\left(\frac{f}{t_2}\right)\varepsilon,\phi(t_1),\phi(f)\right)\ne CC_2\left(1+\frac{f}{t_2}\,\varepsilon,t_1,f\right)\,,
$$
which follows from formula~\eqref{eq:CCres}.
\end{examp}

\section{A new formula for the higher-dimensional Contou-Carr\`ere symbol}

We will use the following simple fact. Let $L$ be a free $\z$-module of rank $2n$ with a basis $\{e_1,\ldots,e_n,x_1,\ldots,x_n\}$. Let $\Kc$ be the set of all pairs $(K,\kappa)$, where $K$ is a (possibly, empty) subset of $\{1,\ldots,n\}$ and ${\kappa\colon K\hookrightarrow \{1,\ldots,n\}}$ is an order-preserving injective map (which might differ from the initial embedding). For each $(K,\kappa)\in\Kc$, define an element
\begin{equation}\label{eq:Rosly}
\mbox{$v_{(K,\kappa)}:=v_1\wedge\ldots\wedge v_n\in\Lambda^n L$}\,,
\end{equation}
where for each $i$, $1\leqslant i\leqslant n$, we put
$$
v_i:=\left\{
\aligned
&e_i,\quad \qquad \quad {\rm if}\quad i\notin K\,,\\
&e_i+x_{\kappa(i)},\quad {\rm if}\quad i\in K\,.\\
\endaligned
\right.
$$
For example, we have that
\begin{equation}\label{eq:exampv}
v_{\varnothing}=e_1\wedge\ldots\wedge e_n\,,\qquad v_{(\{1,\ldots, n\},{\rm id})}=(e_1+x_1)\wedge\ldots\wedge(e_n+x_n)\,.
\end{equation}

\begin{lemma}\label{lem:basis}
The set $\{v_{(K,\kappa)}\}$, where $(K,\kappa)\in\Kc$, is a basis of the $\z$-module $\Lambda^nL$.
\end{lemma}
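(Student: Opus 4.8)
The plan is to exhibit the transition matrix between the proposed family $\{v_{(K,\kappa)}\}$ and the standard monomial basis of $\Lambda^n L$ and to show that, after a suitable ordering, it is triangular with entries $\pm 1$ on the diagonal. Such a matrix has determinant $\pm 1$, hence is invertible over $\z$, which is exactly what is needed for a basis: note that over $\z$ linear independence alone would not suffice (an injection $\z^r\hookrightarrow\z^r$ need not be onto), so I aim directly at the determinant being a unit. Recall that $\Lambda^n L$ is free with standard basis $\{e_A\wedge x_B\}$, where $A,B\subseteq\{1,\ldots,n\}$ satisfy $|A|+|B|=n$, and $e_A$ (resp. $x_B$) denotes the wedge of the $e_i$, $i\in A$ (resp. of the $x_j$, $j\in B$), in increasing order. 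The first step is the bijection between $\Kc$ and this basis: to $(K,\kappa)$ assign $(A,B):=(K^c,\kappa(K))$, where $K^c=\{1,\ldots,n\}\setminus K$; conversely, given $(A,B)$ with $|A|+|B|=n$, one recovers $K=A^c$ (so $|K|=|B|$) and the unique order-preserving bijection $\kappa\colon K\to B$. In particular this gives $|\Kc|=\sum_{k=0}^n\binom{n}{k}^2=\binom{2n}{n}=\rk_\z\Lambda^nL$ by Vandermonde, so the transition matrix is square.

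Next I would expand each $v_{(K,\kappa)}$ by multilinearity of the wedge. Writing $v_i=e_i$ for $i\notin K$ and $v_i=e_i+x_{\kappa(i)}$ for $i\in K$, one obtains
$$
v_{(K,\kappa)}=\sum_{S\subseteq K}\pm\,e_{S^c}\wedge x_{\kappa(S)}\,,
$$
where the summand indexed by $S\subseteq K$ comes from choosing $x_{\kappa(i)}$ at the positions $i\in S$ and $e_i$ at the remaining positions; since $\kappa$ is injective, each summand is, up to sign, a genuine standard basis vector. The crucial quantity is the $x$-degree $|\kappa(S)|=|S|$ of a summand: it is at most $|K|$, with equality exactly when $S=K$, and the corresponding top-degree term is $\pm\,e_{K^c}\wedge x_{\kappa(K)}$, that is, $\pm$ the basis vector attached to $(K,\kappa)$ by the bijection above.

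Finally I would order both rows and columns of the transition matrix by decreasing $x$-degree $|B|$ (arbitrarily within each fixed degree, but using the same ordering on rows and columns through the bijection). For a fixed row $(K,\kappa)$, a column $(A,B)$ receives a nonzero coefficient only if $B=\kappa(A^c)$ and $A\supseteq K^c$, which forces $|B|=|A^c|\le|K|=|\kappa(K)|$; moreover, if $|B|=|\kappa(K)|$ then $A=K^c$ and $B=\kappa(K)$, i.e. the entry is on the diagonal. Hence the matrix is block upper-triangular with respect to the $x$-degree, each diagonal block is a $\pm 1$ diagonal matrix, and the total determinant is $\pm 1$; the signs are irrelevant, being units in $\z$. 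I expect the one point meriting explicit care to be precisely this vanishing of same-degree off-diagonal entries together with the bijection identifying the diagonal: equivalently, that the top terms $\pm\,e_{K^c}\wedge x_{\kappa(K)}$ of the various $v_{(K,\kappa)}$ are, up to sign, pairwise distinct standard basis vectors exhausting the whole monomial basis, so that the associated graded map (for the $x$-degree filtration) is an isomorphism.
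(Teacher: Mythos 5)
Your proof is correct, but it takes a genuinely different route from the paper's, essentially running the argument in the opposite direction. Both proofs start with the same counting step: your bijection $(K,\kappa)\mapsto (K^c,\kappa(K))$ is the paper's bijection in different clothing, giving $|\Kc|=\binom{2n}{n}=\rk_{\z}\Lambda^n L$. After that, the paper proves \emph{generation}: it writes each standard monomial $e_{p_1}\wedge\ldots\wedge e_{p_m}\wedge x_{r_1}\wedge\ldots\wedge x_{r_{n-m}}$ explicitly as a signed sum of the $v_{(K,\kappa)}$ (by substituting $x_{r_i}=(e_{q_i}+x_{r_i})-e_{q_i}$ and expanding, which is formula~\eqref{eq:wedge}), and then concludes via the standard fact that a generating set whose cardinality equals the rank of a free module is a basis. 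You instead expand each $v_{(K,\kappa)}$ in the monomial basis and prove that the transition matrix is unimodular by a leading-term argument for the $x$-degree filtration; this establishes independence and generation simultaneously, is self-contained (no appeal to the ``generating set of the right size'' fact, whose necessity over $\z$ you correctly flag), and your identification of the diagonal entries is exactly right, since $|B|=|K|$ forces $S=K$, so there is no cancellation issue. The trade-off is that the paper's explicit inversion formula~\eqref{eq:wedge} is not a throwaway computation: it is reused later in Example~\ref{ex:tilde}(iii) to derive formula~\eqref{eq:Denis} for the Contou-Carr\`ere symbol, so the paper's proof does double duty, whereas your filtration argument, while cleaner as a pure proof of the basis property, leaves the inverse matrix implicit and one would still need to invert your triangular matrix to recover those later formulas.
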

\begin{proof}
We claim that the set $\Kc$ is naturally bijective with the set of $n$-combinations from a set of $2n$ elements. Indeed, a pair $(K,\kappa)$ corresponds to the subset
$$
\mbox{$K\sqcup \overline{\kappa(K)}\subset \{1,\ldots,n\}\sqcup\{1,\ldots,n\}\simeq\{1,\ldots,2n\}$}\,,
$$
where $\overline{\kappa(K)}:=\{1,\ldots,n\}\smallsetminus \kappa(K)$. Hence the set $\Kc$ has ${2n}\choose{n}$ elements, which is also equal to the rank of $\Lambda^n L$ over $\z$.

We see that it is enough to show that the set $\{v_{(K,\kappa)}\}$, where $(K,\kappa)\in\Kc$, generates the $\z$-module $\Lambda^nL$. Consider an element ${e_{p_1}\wedge\ldots\wedge e_{p_m}\wedge x_{r_1}\wedge\ldots\wedge x_{r_{n-m}}\in\Lambda^n L}$, where ${0\leqslant m\leqslant n}$, ${1\leqslant p_1<\ldots< p_m\leqslant n}$, and ${1\leqslant r_1<\ldots< r_{n-m}\leqslant n}$ (in particular, when $m=0$, there is no $p_i$ and when $m=n$, there is no $r_j$). Let ${1\leqslant q_1<\ldots <q_{n-m}\leqslant n}$ be the complementary collection to $\{p_1,\ldots,p_m\}$ in $\{1,\ldots,n\}$. Let $\sigma$ be the permutation that sends $(1,\ldots,n)$ to $(p_1,\ldots,p_m,q_1,\ldots,q_{n-m})$. Define an element $(M,\mu)\in\Kc$, where $M:=\{q_1,\ldots,q_{n-m}\}$ and $\mu(q_i):=r_{i}$, where ${1\leqslant i\leqslant n-m}$.

Then there is an equality in $\Lambda^n L$
\begin{equation}\label{eq:wedge}
e_{p_1}\wedge\ldots\wedge e_{p_m}\wedge x_{r_1}\wedge\ldots\wedge x_{r_{n-m}}=
(-1)^{\sgn(\sigma)+n-m}\sum_{K\subset M}(-1)^{|K|}v_{(K,\kappa)}\,,
\end{equation}
where $K$ runs over all subsets of $M$ (including the empty set), $\kappa$ is the restriction of $\mu$ to $K$, and $|K|$ denotes the number of elements in the set $K$. This is obtained directly as a result of opening brackets after we replace $x_{r_i}$ by the equal expression $(e_{q_i}+x_{r_i})-e_{q_i}$ for all $i$, $1\leqslant i\leqslant n-m$.

Since the elements $e_{p_1}\wedge\ldots\wedge e_{p_m}\wedge x_{r_1}\wedge\ldots\wedge x_{r_{n-m}}$ generate $\Lambda^n L$, this finishes the proof.
\end{proof}

\medskip

Let $g=(g_1,\ldots,g_n)$ be a collection of invertible iterated Laurent series from $\LL^n(A)^*$ such that the $(n\times n)$-matrix $\big(\nu(g)\big):=\big(\nu(g_1),\ldots,\nu(g_n)\big)$ is upper-triangular with units on the diagonal. As it was explained in Section~\ref{sec:prel}, there is a unique continuous automorphism of the $A$-algebra ${\phi_g\colon \LL^n(A)\to \LL^n(A)}$ such that $\phi_g(t_i)=g_i$ for all $i$, ${1\leqslant i\leqslant n}$. For any element ${f\in \LL^n(A)^*}$, put
$$
\langle f,g\rangle=\langle f,g_1,\ldots,g_n\rangle:=\pi\big(\phi_g^{-1}(f)\big)\,,
$$
where $\pi$ is defined in Section~\ref{sec:prel}, see formula~\eqref{eq:maps}. In particular, we have the equality ${\langle f,t_1,\ldots,t_n\rangle=\pi(f)}$.

Recall that in~\cite[Rem.\,6.4, Theor.\,6.8]{GOT2}, it is given an explicit formula for the inverse to a continuous automorphism of the $A$-algebra $\LL^n(A)$.
This implies the equality
\begin{equation}\label{eq:inverse}
\langle f,g\rangle=\pi\left(\,\mbox{$\sum\limits_{l\in\z^n}\res\big(fg^{-l-1}J(g)dt_1\wedge\ldots \wedge dt_n\big)t^l$}\right)\,,
\end{equation}
where $g^{-l-1}:=g_1^{-l_1-1}\ldots g_n^{-l_n-1}$ and $J(g)\in\LL(A)$ is the Jacobian of $\phi_g$, that is, the determinant of the matrix $\left(\frac{\partial g_i}{\partial t_j}\right)$.

\medskip

Let $e:=(e_1,\ldots,e_n)$ be a row of elements in the $\z$-module $L$. For any element $l=(l_1,\ldots,l_n)\in\z^n$ considered as a column, it is defined an element $e\cdot l=\sum\limits_{i=1}^nl_ie_i\in L$.

Clearly, for any ring $A$, one generalizes the above notation and facts (in particular, Lemma~\ref{lem:basis}) by means of   replacement of  $L$ by a free $\uz(A)$-module $L$ of rank $2n$ with a basis $\{e_1,\ldots,e_n,x_1,\ldots,x_n\}$, where we denoted this module also by $L$ for short.

\begin{defin}\label{defin:CCtilde}
For any ring $A$ and any collection $f_1,\ldots,f_{n+1}\in\LL^n(A)^*$, define an element of $A^*$
\begin{equation}\label{eq:CCtilde}
\widetilde{CC}_n(f_1\ldots,f_{n+1}):=(-1)^{\sgn(\nu(f_1),\,\ldots,\,\nu(f_{n+1}))}
\prod_{(K,\kappa)\in\Kc}\langle f_1,g_{(K,\kappa)}\rangle^{C(K,\kappa)}\,,
\end{equation}
where $C(K,\kappa)$ are elements of $\uz(A)$ such that there is an equality in the free \mbox{$\uz(A)$-mo\-dule}~$\Lambda^n L$
$$
\big(e\cdot\nu(f_2)+x_1\big)\wedge\ldots\wedge\big(e\cdot\nu(f_{n+1})+x_n\big)=\sum_{(K,\kappa)\in\Kc}C(K,\kappa)v_{(K,\kappa)}
$$
with $v_{(K,\kappa)}$ being defined as in~\eqref{eq:Rosly}, and for each $(K,\kappa)\in\Kc$, we put
$$
g_{(K,\kappa)}:=(g_{1},\ldots,g_n)
$$
with
$$
g_i:=\left\{
\aligned
&t_i,\qquad \qquad \qquad \qquad \quad\,\, {\rm if}\quad i\notin K\,,\\
&t_i\cdot f_{\kappa(i)+1}\cdot t^{-\nu(f_{\kappa(i)+1})},\quad {\rm if}\quad i\in K\\
\endaligned
\right.
$$
for each $i$, $1\leqslant i\leqslant n$.
\end{defin}

Note that the elements $C(K,\kappa) \in \uz(A)$ are well-defined by Lemma~\ref{lem:basis}. Also, one checks easily that for any $(K,\kappa)\in\Kc$, the $(n\times n)$-matrix $\big(\nu(g_{(K,\kappa)})\big)$ is the identity. Hence $\langle f,g_{(K,\kappa)}\rangle$ is a well-defined element of $A^*$.

For example, we have that (cf. formula~\eqref{eq:exampv})
\begin{equation}\label{eq:exampg}
g_{\varnothing}=(t_1,\ldots,t_n)\,,\qquad g_{(\{1,\ldots, n\},{\rm id})}=(t_1\cdot f_2\cdot t^{-\nu(f_2)},\ldots,t_n\cdot f_{n+1}\cdot t^{-\nu(f_{n+1})})\,.
\end{equation}

Clearly, the map $\widetilde{CC}_n$ is functorial with respect to a ring~$A$.

\medskip

\begin{theor}\label{theor:tilde}
For any ring $A$ and any collection $f_1,\ldots,f_{n+1}\in\LL^n(A)^*$, there is an equality
$$
\widetilde{CC}_n(f_1,\ldots,f_{n+1})=CC_n(f_1,\ldots,f_{n+1})\,,
$$
where the map $CC_n$ is defined by formula~\eqref{eq:K}.
\end{theor}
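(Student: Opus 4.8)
The plan is to run the same reduction as in the proof of Theorem~\ref{theor:invCC} and then to organize the computation around a sign-twisted, genuinely \emph{alternating} version of $CC_n$. Both $\widetilde{CC}_n$ and $CC_n$ are functorial in $A$, so using a finite product decomposition of $A$ exactly as in case~(iii) of the proof of Theorem~\ref{theor:invCC}, I would first reduce to the case where every $\nu(f_i)$ is a constant vector $l_i\in\z^n$. Then all the exponents $C(K,\kappa)$ and the quantity $\sgn(\nu(f_1),\ldots,\nu(f_{n+1}))$ are honest integers, and by formula~\eqref{eq:inverse} each $\langle f_1,g_{(K,\kappa)}\rangle$ is a regular function; hence $\widetilde{CC}_n$ is a regular function on the relevant ind-affine scheme, which is a product of an ind-flat ind-affine scheme over $\z$ and a thick ind-cone. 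As in Theorem~\ref{theor:invCC}, injectivity of $\OO(X)\to\OO(X_{\Q})$ then lets me assume that $A$ is a $\Q$-algebra.

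Over a $\Q$-algebra I would introduce the normalized symbol
$$
R(f_1,h_1,\ldots,h_n):=(-1)^{\sgn(\nu(f_1),\,\nu(h_1),\,\ldots,\,\nu(h_n))}\,CC_n(f_1,h_1,\ldots,h_n)
$$
and establish three properties. First, $R$ is multilinear, since both $CC_n$ and $\sgn$ are. Second, $R$ is \emph{alternating} in $(h_1,\ldots,h_n)$: when two consecutive $h$'s coincide, formula~\eqref{eq:easy_St} shows that $CC_n$ equals $(-1)^{\det(\ldots)}$, while the defining property $\sgn(v,v,\ldots)\equiv\det(v,\ldots)\pmod 2$ shows that the sign prefactor equals the same $(-1)^{\det(\ldots)}$; since the permutation bringing the repeated pair into the first two slots is even, the two contributions cancel and $R=1$. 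Third, $R(F,t_1,\ldots,t_n)=\pi(F)$ for every $F\in\LL^n(A)^*$: both sides are homomorphisms in $F$, and on the three generating types of $L^n\gm$ the equality follows from~\eqref{eq:CC1},~\eqref{eq:CC2}, and~\eqref{eq:CC3}, using for $F\in(L^n\gm)^{\sharp}$ that $\res\big(\log(F)\,\tfrac{dt_1}{t_1}\wedge\ldots\wedge\tfrac{dt_n}{t_n}\big)$ is the constant coefficient of $\log(F)$, whose exponential is the $\gm$-component $\pi(F)$ in the decomposition~\eqref{eq:decv}.

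Next I would transfer invariance to $R$. Since $CC_n$ is invariant under continuous automorphisms with $d=1$ (Corollary~\ref{cor:invCC}), and $\sgn$ is invariant under automorphisms of $\z^n$ while $\Upsilon(\phi_g^{-1})$ is unipotent, the sign prefactor is unchanged and $R$ is invariant too. Applying invariance to $\phi_g$ with $\nu(g)$ the identity matrix gives $R(f_1,g_1,\ldots,g_n)=R(\phi_g^{-1}(f_1),t_1,\ldots,t_n)$, and combining this with the normalization of the previous paragraph and with $\nu(\phi_g^{-1}(f_1))=\nu(f_1)$ yields $R(f_1,g_1,\ldots,g_n)=\pi(\phi_g^{-1}(f_1))$. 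Each $g_{(K,\kappa)}$ has valuation matrix the identity, so I obtain the key value $R(f_1,g_{(K,\kappa)})=\langle f_1,g_{(K,\kappa)}\rangle$.

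Finally I would run the exterior-algebra expansion. Define $c\colon L\to\LL^n(A)^*$ by $c(e_j)=t_j$ and $c(x_k)=f_{k+1}\,t^{-\nu(f_{k+1})}$; then $c(e\cdot\nu(f_{i+1})+x_i)=f_{i+1}$, and $c$ carries the wedge factors of $v_{(K,\kappa)}$ to the entries of $g_{(K,\kappa)}$. By multilinearity and the alternating property, the map $(y_1,\ldots,y_n)\mapsto R(f_1,c(y_1),\ldots,c(y_n))$ factors through a homomorphism $\Lambda^n L\to A^*$, so feeding in the identity $\big(e\cdot\nu(f_2)+x_1\big)\wedge\ldots\wedge\big(e\cdot\nu(f_{n+1})+x_n\big)=\sum_{(K,\kappa)}C(K,\kappa)\,v_{(K,\kappa)}$ of Lemma~\ref{lem:basis} gives $R(f_1,\ldots,f_{n+1})=\prod_{(K,\kappa)}R(f_1,g_{(K,\kappa)})^{C(K,\kappa)}=\prod_{(K,\kappa)}\langle f_1,g_{(K,\kappa)}\rangle^{C(K,\kappa)}$. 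Multiplying by the self-inverse sign $(-1)^{\sgn(\nu(f_1),\ldots,\nu(f_{n+1}))}$ and recalling $R=(-1)^{\sgn}CC_n$ produces exactly formula~\eqref{eq:CCtilde}, i.e.\ $CC_n=\widetilde{CC}_n$. I expect the main obstacle to be the alternating property in the second paragraph: one must verify that the $\sgn$-twist precisely cancels the non-alternating behavior of $CC_n$ recorded in~\eqref{eq:easy_St}, which is exactly what dictates the sign prefactor in Definition~\ref{defin:CCtilde} and what makes the factorization through $\Lambda^n L$ legitimate.
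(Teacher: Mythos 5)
Your proposal is correct and takes essentially the same route as the paper's own proof: reduce to $\Q$-algebras via injectivity of $\OO(X)\to\OO(X_{\Q})$ for products of ind-flat ind-affine schemes and thick ind-cones, introduce the sign-twisted multilinear alternating map (your $R$, the paper's $F$), prove the normalization $F(f,t_1,\ldots,t_n)=\pi(f)$ from~\eqref{eq:CC1},~\eqref{eq:CC2},~\eqref{eq:CC3}, transfer invariance using Corollary~\ref{cor:invCC}, invariance of $\sgn$, and formula~\eqref{eq:invnu}, and factor through $\Lambda^n L$ via the map sending $e_i\mapsto t_i$, $x_i\mapsto f_{i+1}t^{-\nu(f_{i+1})}$. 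The only cosmetic deviations are your preliminary reduction to constant $\nu(f_i)$ (harmless but unnecessary) and your consecutive-pair verification of the alternating property, where the paper checks the case of equal first two arguments; both are equivalent given antisymmetry.
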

\begin{proof}
As mentioned in Section~\ref{sec:prel}, the morphism of functors $(L^n\gm)^{\times(n+1)}\to \gm$ defined by formula~\eqref{eq:K} coincides after restriction to $\Q$-algebras with the morphism of functors $(L^n\gm)^{\times(n+1)}_{\Q}\to (\gm)_{\Q}$ defined by formulas~\eqref{eq:CC3},~\eqref{eq:CC1}, and~\eqref{eq:CC2}. Moreover, there is only one morphism of functors $(L^n\gm)^{\times(n+1)}\to \gm$ with this property, that is, which coincides with the given one after restriction to $\Q$-algebras. Indeed, the theory of thick ind-cones implies that the natural homomorphism ${\OO\big((L^n\gm)^{\times(n+1)}\big)\to\OO\big((L^n\gm)^{\times(n+1)}_{\Q}\big)}$ is injective. Thus we may assume that $A$ is a $\Q$-algebra, which we do from now on.

We will use the following auxiliary multilinear antisymmetric map: for any collection $f_1,\ldots,f_{n+1}\in\LL^n(A)^*$, put
$$
F(f_1,\ldots,f_{n+1}):=(-1)^{\sgn(\nu(f_1),\,\ldots,\,\nu(f_{n+1}))}\,CC_n(f_1,\ldots,f_{n+1})\in A^*\,.
$$
We claim that the map $F\colon \big(\LL^n(A)^*\big)^{\times (n+1)}\to A^*$ is also alternating, that is, for any collection $f_1,\ldots,f_{n}\in\LL^n(A)^*$, there is an equality
${F(f_1,f_1,f_2,\ldots,f_n)=1}$. Indeed, by formula~\eqref{eq:easy_St}, there is an equality
$$
F(f_1,f_1,f_2,\ldots,f_n)=(-1)^{\sgn(\nu(f_1),\nu(f_1),\,\ldots,\,\nu(f_{n}))}\, (-1)^{\det(\nu(f_1),\,\ldots,\,\nu(f_n))}\,.
$$
Further, by the properties of the map $\sgn$ from Section~\ref{sec:prel}, the right hand side of the latter formula is equal~to
$$
(-1)^{\det(\nu(f_1),\,\ldots,\,\nu(f_{n}))}\,(-1)^{\det(\nu(f_1),\,\ldots,\,\nu(f_{n}))}=1\,.
$$

For any $f \in \LL^n(A)^*$ and $\underline{l} \in \uz(A)$, the assignment $f \mapsto f^{\underline{l}} $
gives the structure of a \mbox{$\uz(A)$-mo\-dule} on the group $\LL^n(A)^*$.
Now consider a collection $f_1,\ldots,f_{n+1}\in\LL^n(A)^*$. Define a homomorphism of $\uz(A)$-modules
\begin{equation}\label{eq:alpha}
\alpha\,:\,L\longrightarrow \LL^n(A)^*\,,\qquad e_i\longmapsto t_i\,,\quad x_i\longmapsto f_{i+1}\cdot t^{-\nu(f_{i+1})}\,,
\end{equation}
where $1\leqslant i\leqslant n$. Since the map $F$ is multilinear and alternating, this defines a homomorphism of $\uz(A)$-modules
$$
\beta\,:\,\Lambda^n L\longrightarrow A^*\,,\qquad u_1\wedge\ldots\wedge u_n\longmapsto F\big(f_1,\alpha(u_1),\ldots,\alpha(u_n)\big)\,,
$$
where $u_1,\ldots,u_n\in L$. By construction, we have the equalities in $A^*$
$$
\beta\big((e\cdot\nu(f_2)+x_1)\wedge\ldots\wedge(e\cdot\nu(f_{n+1})+x_n)\big)=F(f_1,f_2,\ldots,f_{n+1})\,,
$$
$$
\beta(v_{(K,\kappa)})=F(f_1,g_{(K,\kappa)})
$$
for any $(K,\kappa)\in\Kc$. This implies that there is an equality
$$
F(f_1,\ldots,f_{n+1})=\prod_{(K,\kappa)\in\Kc} F(f_1,g_{(K,\kappa)})^{C(K,\kappa)}\,.
$$
As mentioned in Section~\ref{sec:prel}, the map $\sgn$ is invariant under automorphisms of the \mbox{$\uz(A)$-mo\-dule}~$\uz^n(A)$. Together with formula~\eqref{eq:invnu} and Corollary~\ref{cor:invCC} this implies that~$F$ is invariant under continuous automorphisms of the $A$-algebra $\LL^n(A)$. Thus for any $(K,\kappa)\in\Kc$, there is an equality
$$
F(f_1,g_{(K,\kappa)})=F(\phi_{g_{(K,\kappa)}}^{-1}(f_1),t_1,\ldots,t_n)\,.
$$
Therefore it remains to prove that for any element $f\in\LL^n(A)^*$, there is an equality
\begin{equation}\label{eq:pi}
F(f,t_1,\ldots,t_n)=\pi(f)\,.
\end{equation}
If $f\in A^*$, then by formula~\eqref{eq:CC1} both sides of the formula~\eqref{eq:pi} are equal to $f$ itself.
If $f=t^l$ for some $l\in\uz^n(A)$, or $f\in\vv_{n,+}$, or $f\in\vv_{n,-}$, then by formula~\eqref{eq:CC2} or~\eqref{eq:CC3}, respectively, both sides of formula~\eqref{eq:pi} are equal to $1$ (recall we are assuming that~$A$ is a \mbox{$\Q$-algebra}). Now formula~\eqref{eq:pi} follows from decomposition~\eqref{eq:decv}. This finishes that proof of the theorem.
\end{proof}

\begin{rmk}\label{rmk:tilde}
For some particular choices of elements $f_1,\ldots,f_{n+1}\in \LL^n(A)^*$, there are other formulas of the same type as formula~\eqref{eq:CCtilde} which also give the higher-dimensional Contou-Carr\`ere symbol (cf. Example~\ref{ex:tilde}(i),(iii) below). Indeed, it follows from the proof of Theorem~\ref{theor:tilde} that one has the equality
$$
CC_n(f_1,\ldots,f_{n+1})=(-1)^{\sgn(\nu(f_1),\,\ldots,\,\nu(f_{n+1}))}
\prod_{(K,\kappa)\in\Kc}\langle f_1,g_{(K,\kappa)}\rangle^{D(K,\kappa)}\,,
$$
where
$$
u_1\wedge\ldots\wedge u_n=\sum\limits_{(K,\kappa)\in\Kc}D(K,\kappa)v_{(K,\kappa)}\in \Lambda^n L
$$
and $u_1,\ldots,u_n\in L$ are any elements that satisfy the condition ${\alpha(u_i)=\alpha\big(e\cdot\nu(f_{i+1})+x_i\big)}$ for all $i$, $1\leqslant i\leqslant n$, with the map ${\alpha\colon L\to \LL^n(A)^*}$ defined by formula~\eqref{eq:alpha}.
\end{rmk}

\medskip

\begin{examp}\label{ex:tilde}
Here are corollaries of Theorem~\ref{theor:tilde} combined with formula~\eqref{eq:CCtilde} and Remark~\ref{rmk:tilde}.
\begin{itemize}
\item[(i)]
If $n=1$, then for all elements ${f_1,f_2\in\LL(A)^*=A((t))^*}$, there is an equality
\begin{equation}\label{eq:CConedim}
CC_1(f_1,f_2)=(-1)^{\nu(f_1)\nu(f_2)}\langle f_1,t\rangle^{\nu(f_2)-1}\langle f_1,t^{1-\nu(f_2)}\cdot f_2\rangle=
\end{equation}
$$
=(-1)^{\nu(f_1)\nu(f_2)}\pi(f_1)^{\nu(f_2)-1}\langle f_1,t^{1-\nu(f_2)}\cdot f_2\rangle\,.
$$
Indeed, in this case, $\sgn\big(\nu(f_1),\nu(f_2)\big)\equiv \nu(f_1)\nu(f_2)\pmod{2}$, the set $\Kc$ consists of two elements: the empty set $\varnothing$ and $(\{1\},{\rm id})$, and by formula~\eqref{eq:exampv}, we have
$$
e\cdot \nu(f_2)+x=\big(\nu(f_2)-1\big)v_{\varnothing}+v_{(\{1\},{\rm id})}\,.
$$
Now we apply formula~\eqref{eq:exampg}.

Note that if $f_2=t^{\underline{l}}$ for some $\underline{l} \in \uz(A)$, then the right hand side of formula~\eqref{eq:CConedim} takes the form
$$
(-1)^{\nu(f_1)\nu(f_2)}\pi(f_1)^{\nu(f_2)-1}\langle f_1,t\rangle=(-1)^{\nu(f_1)\nu(f_2)}\pi(f_1)^{\nu(f_2)}\,.
$$
This agrees with Remark~\ref{rmk:tilde}, because in this case, there is an equality ${\alpha\big(e\cdot \nu(f_2)+x\big)=\alpha\big(e\cdot \nu(f_2)\big)}$ and we have $e\cdot \nu(f_2)=\nu(f_2)v_{\varnothing}$.
\item[(ii)]
If the $(n\times n)$-matrix $\big(\nu(f_2),\ldots,\nu(f_{n+1})\big)$ is the identity, then there is an equality
$$
CC_n(f_1,f_2,\ldots,f_{n+1})=(-1)^{\sum\limits_{i=1}^n \underline{l}_i}\,\langle f_1,f_2,\ldots,f_{n+1}\rangle\,,
$$
where $\nu(f_1)=(\underline{l}_1,\ldots, \underline{l}_n)$, $\underline{l}_i\in \uz(A)$. Indeed, in this case, we have ${\sgn\big(\nu(f_1),\ldots,\nu(f_{n+1})\big)\equiv\sum\limits_{i=1}^n \underline{l}_i}\pmod{2}$ (it is enough to check this when~$\nu(f_1)$ is an element of the standard basis in  $\uz(A)$-module $\uz^n(A)$, in which case this is obvious). Further, by formula~\eqref{eq:exampv}, we have
$$
\big(e\cdot\nu(f_2)+x_1\big)\wedge\ldots\wedge\big(e\cdot\nu(f_{n+1})+x_n\big)=v_{(\{1,\ldots, n\},{\rm id})}
$$
and we apply formula~\eqref{eq:exampg}.
\item[(iii)]
Suppose that there is a collection ${1\leqslant p_1<\ldots<p_m\leqslant n}$ with ${0\leqslant m\leqslant n}$ such that ${f_2=t_{p_1},\ldots,f_{m+1}=t_{p_m}}$ and ${\nu(f_{m+2})=\ldots=\nu(f_{n+1})=0}$. Let ${1\leqslant q_1<\ldots <q_{n-m}\leqslant n}$ be the complementary collection to $\{p_1,\ldots,p_m\}$ in $\{1,\ldots,n\}$. Let $\sigma$ be the permutation that sends $(1,\ldots,n)$ to $(p_1,\ldots,p_m,q_1,\ldots,q_{n-m})$. Define an element $(M,\mu)\in\Kc$, where ${M:=\{q_1,\ldots,q_{n-m}\}}$ and $\mu(q_i):=m+i$, ${1\leqslant i\leqslant n-m}$. Then there is an equality
\begin{equation}\label{eq:Denis}
CC_n(f_1,f_2,\ldots,f_{n+1})=(-1)^{\sgn(\nu(f_1),\,\ldots,\,\nu(f_{n+1}))}\,
\prod_{K\subset M}\langle f_1,g_{(K,\kappa)}\rangle^{(-1)^{(|K|+\sgn(\sigma)+n-m)}}\,,
\end{equation}
where $K$ runs over all subsets of $M$ (including the empty set) and $\kappa$ is the restriction of $\mu$ to $K$. Indeed, in this case, there are equalities
$$
\alpha\big(e\cdot\nu(f_{i+1})+x_i\big)=\alpha(e_{p_i})\,,\qquad 1\leqslant i\leqslant m\,,
$$
$$
\alpha\big(e\cdot\nu(f_{i+1})+x_i\big)=\alpha(x_i)\,,\qquad m+1\leqslant i\leqslant n\,.
$$
Now we apply Remark~\ref{rmk:tilde} and formula~\eqref{eq:wedge}.

Note that if $m<n$, then $\sgn\big(\nu(f_1),\ldots,\nu(f_{n+1})\big)\equiv 0\pmod{2}$, because~${\nu(f_{n+1})=0}$.
\end{itemize}
\end{examp}

\begin{rmk}
Example~\ref{ex:tilde}(ii) has the following generalization. Suppose that the \mbox{$(n\times n)$-matrix} $\big(\nu(f_2),\ldots,\nu(f_{n+1})\big)$ is upper-triangular with units on the diagonal. Then there is an equality
$$
CC_n(f_1,f_2,\ldots,f_{n+1})=(-1)^{\sgn(\nu(f_1),\,\ldots,\,\nu(f_{n+1}))}\,\langle f_1,f_2,\ldots,f_{n+1}\rangle\,.
$$
Indeed, this follows from invariance of the map $\sgn$ under automorphisms of the \mbox{$\uz(A)$-mo\-dule}~$\uz^n(A)$, formula~\eqref{eq:invnu}, Corollary~\ref{cor:invCC}, and Example~\ref{ex:tilde}(ii) applied to the collection ${\phi^{-1}(f_1),t_1,\ldots,t_n\in\LL^n(A)^*}$, where $\phi\colon\LL^n(A)\to\LL^n(A)$ is a continuous automorphism such that $\phi(t_i)=f_{i+1}$, $1\leqslant i\leqslant n$.
\end{rmk}

\medskip

\begin{rmk}\label{rmk:explalt}
Here is a more explicit way to compute the higher-dimensional Contou-Carr\`ere symbol, essentially using Example~\ref{ex:tilde}(iii), which is based on
Theorem~\ref{theor:tilde} and Remark~\ref{rmk:tilde}. Actually, this  gives also a correct definition (slightly different from formula~\eqref{eq:CCtilde}) of the higher-dimensional Contou-Carr\`ere symbol over an arbitrary ring. Let~$A$ be any ring and let $f_1,\ldots,f_{n+1}\in\LL^n(A)^*$. In order to compute $CC_n(f_1,\ldots,f_{n+1})\in A^*$, one makes the following steps.

\begin{enumerate}
\item
Take the decomposition into a finite product of rings $A\simeq\prod\limits_{j=1}^N A_j$ such that for all~$j$ with $1\leqslant j\leqslant N$, the restriction of $(\nu(f_1), \ldots, \nu(f_{n+1}))$ to $\Spec(A_j)$ is a constant $\z^{n^2}$-valued function and
 for all $j_1, j_2$ with $1 \leqslant  j_1 < j_2 \leqslant N $, the restrictions of $(\nu(f_1), \ldots, \nu(f_{n+1}))$ to $\Spec(A_{j_1})$ and $\Spec(A_{j_2})$ are not equal.
 Clearly, this decomposition is unique.  By functoriality of $CC_n$  we have
$$
CC_n(f_1, \ldots , f_{n+1})= \prod_{j=1}^N CC_n(f_{1,j}, \ldots, f_{n+1, j})\,,
$$
where   $f_i = \prod\limits_{j=1}^N f_{i,j}$ for any $1 \leqslant i \leqslant n+1$, and $f_{i,j} \in \LL^n(A_j)^*$.
Replace $A$ by each of~$A_j$, thus suppose from now on that all $\nu(f_i)$ are constant $\z^{n}$-valued functions.
\item
By decomposition~\eqref{eq:dec1} and multilinearity of $CC_n$, the symbol $CC_n(f_1,\ldots,f_{n+1})$ decomposes uniquely into a product of symbols $CC_n(g_1,\ldots,g_{n+1})$ such that for each $i$, $1\leqslant i\leqslant n+1$, we have that either $g_i\in\{t_1,\ldots,t_n\}$, or $\nu(g_i)=0$. Replace the collection $(f_1,\ldots,f_{n+1})$ by each collection of such type.
\item
Suppose that there are equal elements among $f_1,\ldots,f_{n+1}$ and let $f_i=f_j$ be the equality with the smallest pair $(i,j)$, $i<j$, where we consider a natural lexicographical order on the pairs. Then put
$$
CC_n(f_1,\ldots,f_{n+1})=(-1)^{\,\det(\nu(f_1),\,\ldots,\,\widehat{\nu(f_j)},\ldots,\,\nu(f_{n+1}))}\,,
$$
where the notation $\widehat{\nu(f_j)}$ means that we omit $\nu(f_j)$. Note that here we use the antisymmetric property of $CC_n$ and formula~\eqref{eq:easy_St}, which is true for any ring $A$, see reasonings in Section~\ref{sec:prel}.
\item
Suppose that all elements $f_1,\ldots,f_{n+1}$ are different. Then there is a unique permutation $\sigma$ of the collection $(f_{2},\ldots, f_{n+1})$ such that $\sigma$ preserves the order between $f_i$'s with $\nu(f_i)=0$ and the result of the permutation is of type $(t_{p_1},\ldots,t_{p_m},g_{m+1},\ldots,g_n)$, where $0\leqslant m\leqslant n$, $1\leqslant p_1<\ldots<p_m\leqslant n$, and $\nu(g_i)=0$ for all $i$, $m+1\leqslant i\leqslant n$. Using the antisymmetric property of~$CC_n$, put
$$
CC_n(f_1,f_2,\ldots,f_{n+1})=CC_n(f_1,t_{p_1},\ldots,t_{p_m},g_{m+1},\ldots,g_n)^{\sgn(\sigma)}
$$
and replace the collection $(f_1,f_2,\ldots,f_{n+1})$ by the collection $(f_1,t_{p_1},\ldots,t_{p_m},g_{m+1},\ldots,g_n)$.
\item
For a collection $(f_1,f_2,\ldots,f_{n+1})=(f_1,t_{p_1},\ldots,t_{p_m},f_{m+2},\ldots,f_{n+1})$ such that ${\nu(f_{m+1})=\ldots=\nu(f_{n+1})=0}$, define $CC_n(f_1,f_2,\ldots,f_{n+1})$ by formula~\eqref{eq:Denis}.
\end{enumerate}
\end{rmk}

\begin{rmk} \label{rmk:integrab}
Take positive integers $1\leqslant j_1<\ldots<j_q\leqslant n$, where $1\leqslant q\leqslant n$. Let $\Q[[x_{i,l}]]$ be the ring of power series in formal variables $x_{i,l}$, where $1\leqslant i\leqslant n+1-p$ and $l\in\z^n$, see~\cite[Def.\,5.1]{GOMS}. Consider infinite (in all ''directions'') series
$\mbox{$f_i:=1+\sum\limits_{l\in\z^n}x_{i,l}t^l$}$, where $1\leqslant i\leqslant n+1-p$. Formally opening brackets in the expression (cf. formula~\eqref{eq:CC3})
$$
\exp\,\res\left(\log(f_1)\,\frac{df_2}{f_2}\wedge\ldots\wedge
\frac{df_{p}}{f_{p}}\wedge\frac{dt_{j_1}}{t_{j_1}}\wedge\ldots\wedge\frac{dt_{j_q}}{t_{j_q}}\right)\,,
$$
we obtain a power series $\varphi_{n,j_1,\ldots,j_q}\in \Q[[x_{i,l}]]$. A more rigorous definition of the series~$\varphi_{n,j_1,\ldots,j_q}$ is given in~\cite[\S\,8.6]{GOMS}, where it is also shown that, in fact, this series has integral coefficients. The proof of this fact uses the existence of the higher-dimensional Contou-Carr\`ere symbol for all rings, which was constructed in~\cite{GOMS} with the help of the boundary map for algebraic $K$-groups by formula~\eqref{eq:K}, see the discussion before~\cite[Ex.\,8.32]{GOMS}. Now, Theorem~\ref{theor:tilde} gives an explicit formula for the higher-dimensional Contou-Carr\`ere symbol for any ring. After all, this gives a new proof of the integrality of coefficients of the series $\varphi_{n,j_1,\ldots,j_q}$, without the use of algebraic $K$-theory.
\end{rmk}

\bigskip

Sergey Gorchinskiy

Steklov Mathematical Institute of Russian Academy of Sciences, ul. Gubkina 8, Moscow, 119991 Russia

{\em E-mail address}: gorchins@mi.ras.ru

\bigskip

Denis Osipov

Steklov Mathematical Institute of Russian Academy of Sciences, ul. Gubkina 8, Moscow, 119991 Russia

National University of Science and Technology MISIS, Leninskii pr. 4, Moscow, 119049 Russia

{\em E-mail address}: d\_osipov@mi.ras.ru

\end{document}